\renewcommand{\leq}{\leqslant}
\renewcommand{\geq}{\geqslant}
\newcommand{\ind}[1]{1_{#1}}
\newcommand{\bbz}{\mathbb{Z}}
\newcommand{\bbr}{\mathbb{R}}
\newcommand{\bbq}{\mathbb{Q}}
\newcommand{\bbc}{\mathbb{C}}
\newcommand{\td}{\,\mathrm{d}}
\newcommand*{\bbe}{
  \mathop{
    \mathchoice{\vcenter{\hbox{\larger[4]$\mathbb{E}$}}}
               {\kern0pt\mathbb{E}}
               {\kern0pt\mathbb{E}}
               {\kern0pt\mathbb{E}}
  }\displaylimits
}
\newcommand{\abs}[1]{\left\lvert #1\right\rvert}
\newcommand{\Abs}[1]{\lvert #1\rvert}
\newcommand{\brac}[1]{\left( #1\right)}
\newcommand{\norm}[1]{\left\lVert #1\right\rVert}
\newcommand{\Norm}[1]{\lVert #1\rVert}
\newtheorem{theorem}{Theorem}
\newtheorem{lemma}{Lemma}
\newtheorem{corollary}{Corollary}
\newtheorem{conjecture}{Conjecture}
\newtheorem{proposition}{Proposition}
\theoremstyle{definition}
\begin{document}
\title{Integers with small digits in multiple bases}
\author{Thomas F. Bloom}
\address{Department of Mathematics, University of Manchester, Manchester, M13 9PL}
\email{thomas.bloom@manchester.ac.uk}
\author{Ernie Croot}
\address{Department of Mathematics, Georgia Tech, Atlanta, USA}
\maketitle

\begin{abstract}
We show that, for any $r\geq 1$, if $g_1,\ldots,g_r$ are distinct coprime integers, sufficiently large depending only on $r$, then for any $\epsilon>0$ there are infinitely many integers $n$ such that all but $\epsilon \log n$ of the digits of $n$ are $\leq g_i/2$ in base $g_i$ for all $1\leq i\leq r$. In other words, for any fixed large bases, there are infinitely many $n$ such that almost all of the digits of $n$ are small in all bases simultaneously. This is both a quantitative and qualitative improvement over previous work of Croot, Mousavi, and Schmidt. As a consequence, we obtain a weak answer to a conjecture of Graham concerning divisibility of $\binom{2n}{n}$. 
\end{abstract}
Graham conjectured (see, for example, \cite{EG}) that there are infinitely many integers $n$ such that $\binom{2n}{n}$ is coprime to $105=3\cdot 5\cdot 7$. By Kummer's criterion, a prime $p$ does not divide $\binom{2n}{n}$ if and only if all of the base $p$ digits of $n$ are $< p/2$. Graham's conjecture is therefore equivalent to the question of whether there are infinitely many integers such that in base $3$ all digits are in $\{0,1\}$, in base $5$ all digits are in $\{0,1,2\}$, and in base $7$ all digits are in $\{0,1,2,3\}$. For example,
\[756 = (1001000)_3 = (11011)_5 = (2130)_7.\]

This is a special case of a more general phenomenon, where we expect infinitely many $n$ all of whose digits are small in multiple bases simultaneously, provided such bases are sufficiently large. To get an idea how large is necessary, we have the following heuristic of Pomerance \cite{Po}: note first that, if $N$ is a power of $g$, then the number of $0\leq n<N$ all of whose base $g$ digits are $< g/2$ is exactly $N^{\log_g(\lceil g/2\rceil)}$. Heuristically, therefore, the chance that $n$ has all of its base $g$ digits $< g/2$ is $\approx n^{\log_g(\lceil g/2\rceil)-1}$. Assuming these events are independent for distinct bases $g_1,\ldots,g_r$, we expect that the chance that $n\in[N,2N)$ has all base $g_j$ digits $\leq g_j/2$ (for all $1\leq j\leq r$) is $\approx N^{\sum_j\log_{g_j}(\lceil g_j/2\rceil)-r}$. If this is $>N^{-1}$ then it follows that there should exist at least one such $n\in [N,2N)$ for all sufficiently large $N$. Therefore, provided the bases $g_j$ are `independent' and
\[\sum_j\log_{g_j}(\lceil g_j/2\rceil)-r=-\sum_j \log_{g_j}\brac{\frac{g_j}{\lceil g_j/2\rceil}}>-1,\]
there should exist infinitely many $n$ all of whose digits base $g_j$ are $< g_j/2$ for all $1\leq j\leq r$ simultaneously.

One obvious way that the digits between different bases $g_j$ may not be independent is when one base is a power of another base. To rule out such dependencies, it is natural to insist that $g_i^{a_i}\neq g_j^{b_j}$ for all $i\neq j$ and integers $a_i,b_j\geq 1$ (notice for example that this is certainly satisfied if the bases are pairwise coprime). Assuming this condition, and generalising the heuristic of Pomerance above, we are led to the following conjecture. 
\begin{conjecture}\label{conj}
Let $r\geq 1$ and $g_1,\ldots,g_r\geq 2$ be integers such that $g_i^{a_i}\neq g_j^{b_j}$ for all $i\neq j$ and integers $a_i,b_j\geq 1$, with associated $\kappa_1,\ldots,\kappa_r\in(0,1]$ such that 
\begin{equation}\label{eq-conjcond}\sum_{1\leq j\leq r}\log_{g_j}\brac{\frac{g_j}{\lceil \kappa_jg_j\rceil}}<1.
\end{equation}
There are infinitely many $n$ such that, for all $1\leq j\leq r$, all base $g_j$ digits of $n$ are $< \kappa_j g_j$.
\end{conjecture}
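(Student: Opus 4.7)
The plan is a Fourier-analytic attack on the intersection $A_1\cap\cdots\cap A_r$ (where $A_j$ is the set of integers with all base-$g_j$ digits less than $\kappa_jg_j$), via a structure-versus-junk decomposition of the indicator functions, with the multiplicative-independence hypothesis on the bases reserved for the final step.

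Fix $r$, the bases $g_j$, and the $\kappa_j$ satisfying \eqref{eq-conjcond}, and let $A_j(N)\subseteq[0,N)$ be the set of $n$ with all base-$g_j$ digits strictly less than $\kappa_jg_j$. A direct count gives $\abs{A_j(N)}=N^{\log_{g_j}\lceil\kappa_jg_j\rceil+o(1)}$, and the Pomerance heuristic predicts
\[\abs{A_1(N)\cap\cdots\cap A_r(N)}\gs N^{1-\sum_j\log_{g_j}(g_j/\lceil\kappa_jg_j\rceil)},\]
a positive power of $N$ under \eqref{eq-conjcond}; it suffices to produce such a lower bound for an unbounded sequence of $N$. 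I would work modulo a carefully chosen $M$ of the form $\prod_jg_j^{k_j}$, chosen so that each $A_j(M)$ has the cleanest possible tensor-product structure in its base-$g_j$ digits.

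The crucial feature to exploit is that $\ind{A_j(M)}$ factors as a tensor product in Fourier space: its transform splits over digit positions into a product of $\lceil\kappa_jg_j\rceil$-term geometric sums dilated by powers of $g_j$. Expanding $\abs{A_1(M)\cap\cdots\cap A_r(M)}=\sum_n\prod_j\ind{A_j(M)}(n)$ by Fourier inversion on $\bbz/M\bbz$ gives an $r$-fold sum over frequencies with $\xi_1+\cdots+\xi_r\equiv 0\pmod M$. I would then decompose $\ind{A_j(M)}=\struc{f_j}+\junk{f_j}$, where $\struc{f_j}$ is supported on the \emph{$g_j$-structured} frequencies, defined Bohr-style as those $\xi$ for which $g_j^i\xi/M$ is close to an integer for most $i<k_j$. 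The all-structured term $\prod_j\struc{f_j}$ should contribute the predicted main term, and this is where the hypothesis $g_i^{a_i}\neq g_j^{b_j}$ enters crucially: it ensures that $g_i$- and $g_j$-structured frequencies meet only near the origin, so the $g$-adic lattices arising from different bases interact as genuinely as possible.

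The remaining $2^r-1$ mixed terms each contain at least one junk factor and should be bounded by pairing an $\hat L^\infty$ estimate $\anorm{\junk{f_j}}\leq\eta$ with $L^1$-bounds on the structured factors. The main obstacle, in my view, is making $\eta$ small enough to beat the main term: digit sets are themselves highly structured, so one must correctly identify \emph{every} frequency at which $\widehat{\ind{A_j(M)}}$ concentrates, package these into the structured part, and then prove exponential decay of the transform off this set. Closing the argument also requires a chinese-remainder style count of how many tuples of $g_j$-structured $\xi_j$ can satisfy $\sum_j\xi_j\equiv 0\pmod M$, which is a quantitative form of multiplicative independence and is presumably where the hypothesis that the bases be sufficiently large must enter if the bounds are to be uniform. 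Pushing the argument through with zero exceptional digits, rather than with the $\epsilon\log n$ allowance of the theorem actually proved in the paper, looks genuinely delicate and presumably explains why the conjecture remains open in full.
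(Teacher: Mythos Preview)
The statement you are addressing is Conjecture~\ref{conj}, not a theorem: the paper does not prove it and says so explicitly (``The main result of this paper establishes a weak version of this conjecture, weaker in two ways\ldots''). There is therefore no proof in the paper to compare your proposal against. What the paper actually proves is Theorem~\ref{th-ologn}, which allows $\epsilon\log n$ exceptional digits and imposes the much stronger condition~\eqref{eq-const} on the bases.

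That said, a few remarks on your plan. First, there is a technical slip at the setup stage: you propose to work modulo $M=\prod_j g_j^{k_j}$ and then assert that $\widehat{\ind{A_j(M)}}$ factors as a product over digit positions. But that tensor factorisation holds for the Fourier transform over $\bbz/g_j^{k_j}\bbz$, not over $\bbz/M\bbz$; since $M$ is not a power of $g_j$, the set $A_j(M)\subset[0,M)$ does not have a clean digit-tensor structure modulo $M$, and the Riesz-product formula you want is not available in that group. One can try to repair this by working in the product group $\prod_j\bbz/g_j^{k_j}\bbz$ and embedding $[0,N)$ diagonally, but then the ``$\xi_1+\cdots+\xi_r\equiv 0$'' constraint becomes a genuinely Diophantine condition across the factors, and the analysis is no easier than the original problem.

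Second, and more fundamentally, the obstacle you yourself flag --- getting the junk term below the main term --- is precisely the reason the conjecture is open. The sets $A_j$ have density roughly $N^{-\log_{g_j}(g_j/\lceil\kappa_jg_j\rceil)}$, so the predicted main term is $N^{1-\sum_j\log_{g_j}(g_j/\lceil\kappa_jg_j\rceil)}$, which under~\eqref{eq-conjcond} can be an arbitrarily small positive power of $N$. Any Fourier error term that loses even a fixed power of $N$ will swamp it. Your structure/junk split does not come with a mechanism for pushing the junk below every positive power, and identifying ``every frequency at which $\widehat{\ind{A_j}}$ concentrates'' is essentially as hard as the problem itself.

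For context, the paper's route to the \emph{weak} result is not a direct count of $\bigcap_jA_j$ at all: it is a semi-constructive, top-down digit-fixing argument (Proposition~\ref{prop-main}), with Fourier analysis on $(\bbr/\bbz)^r$ used only to show that at almost every stage a suitable correction exists (Proposition~\ref{prop-fourier}), combined with a large-spectrum estimate for digit sets (Lemma~\ref{lem:spec}) and an equidistribution input (Lemma~\ref{lem:badbound}). This is structurally quite different from the global Fourier expansion you sketch.
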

For example, Graham's conjecture is the case when $g_1=3$, $g_2=5$, $g_3=7$, and $\kappa_1=\kappa_2=\kappa_3=1/2$, and indeed we calculate
\[\log_3(3/2)+\log_5(5/3)+\log_7(7/4)=0.974\cdots < 1.\]

The main result of this paper establishes a weak version of this conjecture, weaker in two ways: we have a stricter condition (although of a similar shape) on how large the bases need to be, and (more significantly) we only obtain almost all of the digits being small. 
\begin{theorem}\label{th-ologn}
Let $r\geq 1$ and $g_1,\ldots,g_r\geq 2$ be integers such that $g_i^{a_i}\neq g_j^{b_j}$ for all $i\neq j$ and integers $a_i,b_j\geq 1$, with associated $\kappa_1,\ldots,\kappa_r\in(0,1]$ such that 
\begin{equation}\label{eq-const}
\sum_{1\leq j\leq r} \log_{g_j} (320r^5/\kappa_j)< \frac{1}{2r}.\end{equation}
(Note in particular that this holds for all $g_1,\ldots,g_r$ sufficiently large in terms of $r,\kappa_1,\ldots,\kappa_r$). 

For any $\epsilon>0$ there are infinitely many $n$ such that, for all $1\leq j\leq r$, all but at most $\epsilon \log n$ of the base $g_j$ digits of $n$ are $< \kappa_j g_j$. 
\end{theorem}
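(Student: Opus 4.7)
The plan is to prove the theorem via Fourier analysis on a large cyclic group. Fix $L_j:=\lfloor\log N/\log g_j\rfloor$ for a large parameter $N$, and let $f_j\colon\bbz/g_j^{L_j}\bbz\to\{0,1\}$ be the indicator of those $n$ whose base-$g_j$ expansion contains at most $\epsilon L_j$ digits outside $\{0,1,\ldots,\lceil\kappa_jg_j\rceil-1\}$, extended $g_j^{L_j}$-periodically to $\bbz$. A direct binomial count gives the density
\[\delta_j:=\widehat{f_j}(0)\gs\binom{L_j}{\epsilon L_j}\kappa_j^{(1-\epsilon)L_j}.\]
Taking logarithms and using $L_j\log g_j\sim\log N$ yields $N\prod_j\delta_j\gg N^{1-\sum_j\log_{g_j}(1/\kappa_j)+o_\epsilon(1)}$, and hypothesis \eqref{eq-const} forces this to be at least $N^{1-1/(2r)+o(1)}$. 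It then suffices to show $\sum_{0\leq n<N}\prod_j f_j(n)\gg N\prod_j\delta_j$ for infinitely many $N$; the conclusion "infinitely many $n$" then follows by letting $N\to\infty$ through a suitable subsequence.

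\textbf{Spectral decomposition and junk cross-terms.} For a small threshold $\eta>0$ to be optimised, I would introduce the large spectrum $\Lambda_j:=\{\xi\in\bbz/g_j^{L_j}\bbz:\abs{\widehat{f_j}(\xi)}\geq\eta\delta_j\}$ and split $f_j=S_j+J_j$ with
\[S_j(n):=\sum_{\xi\in\Lambda_j}\widehat{f_j}(\xi)\,e(n\xi/g_j^{L_j}),\qquad J_j:=f_j-S_j.\]
Parseval then gives $\abs{\Lambda_j}\leq\eta^{-2}\delta_j^{-1}$, and by construction $\anorm{J_j}_\infty\leq\eta\delta_j$. Expanding $\prod_jf_j=\sum_{\sigma\subseteq[r]}\prod_{j\in\sigma}S_j\prod_{j\notin\sigma}J_j$ produces $2^r$ pieces. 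For any $\sigma\neq[r]$, the strategy is to single out a junk coordinate $k\notin\sigma$ and bound its contribution using the small $\anorm{J_k}_\infty$ on that coordinate together with Parseval/Cauchy--Schwarz on the others, with the parameters tuned so that each junk cross-term contributes at most $\eta$ times the target $N\prod_j\delta_j$.

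\textbf{The structured term and the main obstacle.} The remaining, fully structured piece unfolds as
\[\sum_{(\xi_j)\in\Lambda_1\times\cdots\times\Lambda_r}\prod_j\widehat{f_j}(\xi_j)\cdot\sum_{n<N}e\brac{n\sum_j\xi_j/g_j^{L_j}},\]
whose zero tuple contributes the main term $N\prod_j\delta_j$. For any nonzero tuple, coprimality and the hypothesis $g_i^{a_i}\neq g_j^{b_j}$ force the phase $\sum_j\xi_j/g_j^{L_j}$ to be a nonzero element of $\bbq/\bbz$ with denominator dividing $\prod_jg_j^{L_j}\approx N^r$, so the inner geometric sum is a priori as large as $N^r$. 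The main obstacle is that a naive triangle-inequality bound on this piece is much too weak once $r\geq 2$: one must exploit that $\widehat{f_j}(\xi)$ factorises across base-$g_j$ digit positions, so the large Fourier coefficients come from $\xi$ supported on few nonzero digit-positions, and combine this structural fact with a sharper Diophantine lower bound on $\|\sum_j\xi_j/g_j^{L_j}\|$ for such sparse tuples. The constants $320r^5/\kappa_j$ and the exponent $1/(2r)$ in \eqref{eq-const} are calibrated precisely to give enough slack to absorb this structured error and the junk cross-terms at a common choice of $\eta$, and this balancing act is where I expect the deepest technical work to lie.
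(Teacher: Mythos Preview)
Your plan is a direct counting argument via the circle method on $\prod_j \bbz/g_j^{L_j}\bbz$, which is fundamentally different from the paper's approach. The paper does not count at all: it \emph{constructs} $n$ iteratively in an auxiliary base $L=\ell^h$ with $(\ell,g_1\cdots g_r)=1$, choosing the base-$L$ digits from highest to lowest so that each new digit forces the corresponding block of base-$g_j$ digits to be small for all $j$ simultaneously (Proposition~\ref{prop-main}). The Fourier analysis (Proposition~\ref{prop-fourier}) is carried out on $(\bbr/\bbz)^r$, not on a finite group, and the role of the digit-set Fourier estimates is to bound the number of \emph{exceptional block positions}, not to cancel an error term in a counting sum.

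The real gap in your plan is the ``sharper Diophantine lower bound on $\norm{\sum_j\xi_j/g_j^{L_j}}$'' that you invoke but do not supply. This phase is a fixed rational with denominator dividing $\prod_j g_j^{L_j}\approx N^r$; to make the geometric sum smaller than the trivial bound $N$ you would need $\norm{\theta}\gg N^{-1}$, which for generic tuples $(\xi_j)$ is a Baker-/Schanuel-type statement with no unconditional proof. Worse, the theorem's hypothesis is only multiplicative independence, not pairwise coprimality, so for bases like $g_1=2,g_2=6$ the phase can actually vanish for nonzero tuples (e.g.\ $1/2+3/6\in\bbz$), and then no cancellation is possible at all. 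The paper's iterative construction is designed precisely to sidestep this: it reduces everything to the equidistribution of $(\{n/\log_L g_1\},\ldots,\{n/\log_L g_r\})$ in $[0,1)^r$, which follows \emph{ineffectively} from Weyl's criterion (Lemma~\ref{lem:weyl}) and needs no quantitative Diophantine input. A secondary but genuine issue is that your $f_j$, being the indicator of ``at most $\epsilon L_j$ bad digits'', does \emph{not} factorise over digit positions, so the structural claim that its large Fourier coefficients are supported on sparse $\xi$ is not available in the form you state; the paper's large-spectrum estimate (Lemma~\ref{lem:spec}) applies only to the fully factorisable ``all digits small'' set.
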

Indeed, the proof actually shows that such an $n$ exists in any interval of the shape 
\[[N,\exp(O(\epsilon^{-1-o(1)}))N]\]
assuming $N$ is sufficiently large depending on $\epsilon,g_1,\ldots,g_r,\kappa_1,\ldots,\kappa_r$ (where the implicit constant depends on $g_1,\ldots,g_r,\kappa_1,\ldots,\kappa_r$ only). We stress, however, that Theorem \ref{th-ologn} is ineffective, in the sense that, given $\epsilon,g_1,...,g_r,\kappa_1,...,\kappa_r$, it does not quantify how large the smallest $n$ must be; by the above we can say that there exist 
\[\gg \epsilon^{1+o(1)}\log N\]
many such integers $n\leq N$ when $N$ is sufficiently large (depending on $\epsilon$), but Theorem~\ref{th-ologn} gives no control on how large this needs to be. 

Put another way, one could ask for a more quantitative result that allows for $\epsilon=\epsilon(n)\to 0$ as $n\to \infty$. A more technically complicated version of the argument in this paper should allow
\[\epsilon=\epsilon(n)\ll (\log n)^{-c}\]
for some constant $c>0$ (depending on the $g_i$ and $\kappa_i$). The limit of the method in this paper, even assuming a strong quantitative form of Schanuel's conjecture, appears to be $\epsilon=(\log n)^{-1/2+o(1)}$. To avoid complicating an already technically involved proof, in this paper we will focus on establishing the qualitative statement of Theorem~\ref{th-ologn}, and defer further discussion about quantitative issues to Section~\ref{sec-imp}.

Kummer's theorem implies that, for any prime $p$, if $p^k$ is the highest power of $p$ that divides $\binom{2n}{n}$ then $k$ is the number of base $p$ digits of $n$ that are $\geq p/2$. A trivial consequence is that, for any fixed primes $p_1,\ldots,p_r$, for any integer $n$ we can write $\binom{2n}{n}=n_1n_2$ where $(n_1,p_1\cdots p_r)=1$ and $n_2\leq n^{O(1)}$.

Theorem~\ref{th-ologn} immediately implies the following improvement, which can be viewed as a weak version of Graham's conjecture.
\begin{corollary}\label{cor-graham}
Suppose $r\geq 3$ and $p_1,\ldots,p_r$ are primes, all sufficiently large depending on $r$ only. For any $\epsilon>0$ there are infinitely many $n$ such that
\[\binom{2n}{n}=n_1n_2\]
where $(n_1,p_1\cdots p_r)=1$ and $n_2\leq n^\epsilon$. 
\end{corollary}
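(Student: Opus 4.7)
The plan is to deduce Corollary~\ref{cor-graham} directly from Theorem~\ref{th-ologn} via Kummer's criterion (restated in the paragraph preceding the corollary). The whole content of the result lies in Theorem~\ref{th-ologn}; what remains is a careful matching of parameters.

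First, given $r\geq 3$ and primes $p_1,\ldots,p_r$ sufficiently large in terms of $r$, I would apply Theorem~\ref{th-ologn} with bases $g_j:=p_j$ and digit thresholds $\kappa_j:=1/2$. The multiplicative independence hypothesis $g_i^{a_i}\neq g_j^{b_j}$ is trivial since the $p_j$ are pairwise coprime. The hypothesis \eqref{eq-const} becomes
\[\sum_{j=1}^r \log_{p_j}(640\, r^5) < \frac{1}{2r},\]
and since each summand tends to $0$ as $p_j\to\infty$, this is satisfied provided the $p_j$ are large enough in terms of $r$.

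Next, given $\epsilon>0$, I would set
\[\epsilon' := \frac{\epsilon}{\sum_{j=1}^r \log p_j}\]
and apply Theorem~\ref{th-ologn} with this $\epsilon'$, producing infinitely many $n$ for which, for each $1\leq j\leq r$, the number $k_j$ of base-$p_j$ digits of $n$ that are $\geq p_j/2$ satisfies $k_j\leq \epsilon'\log n$. By Kummer's theorem, $p_j^{k_j}$ is then the exact power of $p_j$ dividing $\binom{2n}{n}$. Setting
\[n_2 := \prod_{j=1}^r p_j^{k_j},\qquad n_1 := \binom{2n}{n}/n_2,\]
I obtain an integer $n_1$ with $(n_1, p_1\cdots p_r)=1$, and
\[\log n_2 = \sum_{j=1}^r k_j\log p_j \leq \epsilon'\log n \sum_{j=1}^r \log p_j = \epsilon\log n,\]
so $n_2\leq n^\epsilon$, as required.

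There is no genuine obstacle here: once Theorem~\ref{th-ologn} is available, the corollary is mechanical. The only mild subtlety is that $\epsilon'$ is allowed to depend on the (fixed) primes $p_j$, which is harmless since those primes are fixed throughout. The qualification $r\geq 3$ plays no role in the deduction itself; it simply reflects the motivating case of Graham's conjecture, and the argument works verbatim for any $r\geq 1$.
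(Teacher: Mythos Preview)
Your proposal is correct and matches the paper's approach exactly: the paper simply states that Theorem~\ref{th-ologn} ``immediately implies'' the corollary via Kummer's theorem, and your argument is precisely the routine unpacking of that implication. The choice $\epsilon'=\epsilon/\sum_j\log p_j$ and the observation that the $r\geq 3$ hypothesis plays no role in the deduction are both accurate.
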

The quantitative improvement to Theorem~\ref{th-ologn} mentioned earlier would allow us to improve this to
\[n_2 \leq \exp((\log n)^{1-c})\]
for some constant $c>0$.
\subsection*{On constants}
The particular shape of the condition \eqref{eq-const} should not be taken too seriously; while we have tried to produce an explicit bound, we have also compromised a little in the strength of this bound to make the proof easier to follow. To get an idea of what this threshold means in concrete terms, one can calculate that, when $r=3$ and $\kappa_j=1/2$, the condition \eqref{eq-const} is satisfied provided $g_1,g_2,g_3\geq 10^{94}$. In particular, Corollary~\ref{cor-graham} holds for any three distinct prime bases $p_1,p_2,p_3\geq 10^{94}$. 

If one more carefully optimised the constants in this argument (at the cost of making the proofs and statements cosmetically messier) then a back of the envelope calculation suggests this threshold could be brought down to perhaps an order of magnitude closer to $\approx 10^{15}$, but achieving a more reasonable threshold (say $<100$) probably requires some new ideas.
\subsection*{Previous work}

A weaker form of Theorem~\ref{th-ologn} was obtained in earlier work of the second author, Mousavi, and Schmidt \cite{CMS}, who proved that, for any $\epsilon>0$ and $r\geq 1$, if $p_1,\ldots,p_r$ are primes sufficiently large depending on both $\epsilon$ and $r$, then there are infinitely many $n$ such that all except $\epsilon \log n$ of the base $p_j$ digits of $n$ are $< p_j/2$ for all $1\leq j\leq r$. In particular, note that $p_1,\ldots,p_r$ must be taken sufficiently large depending on $\epsilon$, not just $r$.

The argument of this paper is a strengthening of that of \cite{CMS}, with the aim of removing this dependence on $\epsilon$. Many component lemmas of \cite{CMS} have been improved both quantitatively and qualitatively; another significant change is that we avoid discretising and rather than work over $\mathbb{Z}/N\mathbb{Z}$ perform our Fourier analysis over $\mathbb{R}/\mathbb{Z}$. Finally, instead of appealing to Parseval's identity as in \cite{CMS}, we introduce a new estimate, Lemma~\ref{lem:spec}, for the number of large Fourier coefficients of sets with small digits.

The case $r=2$ was earlier studied by Erd\H{o}s, Graham, Ruzsa, and Straus \cite{EGRS}, who proved a much stronger result.

\begin{theorem}[Erd\H{o}s, Graham, Ruzsa, and Straus \cite{EGRS}]\label{th-egrs}
If $g_1,g_2\geq 2$ are integers and $1/g_1\leq \kappa_1\leq 1$ and $1/g_2\leq \kappa_2\leq 1$ are such that
\begin{equation}\label{eq-cond}\frac{\lceil \kappa_1g_1\rceil-1}{g_1-1}+\frac{\lceil\kappa_2g_2\rceil-1}{g_2-1}\geq 1\end{equation}
then there are infinitely many $n$ whose base $g_i$ digits are $<\kappa_i g_i$ for $i=1,2$.
\end{theorem}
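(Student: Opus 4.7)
Set $d_i := \lceil \kappa_i g_i \rceil$ and $\alpha_i := (d_i-1)/(g_i-1) \in [0,1]$; the hypothesis \eqref{eq-cond} reads $\alpha_1 + \alpha_2 \geq 1$. Let $A_i$ denote the set of positive integers with all base-$g_i$ digits $< d_i$. The natural continuous framework is that of digit Cantor sets
\[
C_i := \Big\{ x \in [0,1] : x = \sum_{k \geq 1} c_k g_i^{-k},\ c_k \in \{0,\ldots,d_i-1\}\Big\},
\]
each self-similar with $\max C_i = \alpha_i$. Crucially, $x \in C_i$ implies $\lfloor g_i^K x\rfloor \in A_i$ for every $K \geq 1$, so the theorem reduces to producing sufficiently many good approximations arising from $C_1 \cap C_2$.

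My first step would be to establish the continuous analogue $C_1 \cap C_2 \neq \emptyset$ (in fact, uncountable) by a nested-intervals construction. Starting from $I_0 = [0,1]$, I inductively choose a strictly shorter closed sub-interval $I_{n+1} \subset I_n$ still meeting both $C_1$ and $C_2$; iteration gives a nested sequence whose intersection contains a point of $C_1 \cap C_2$, and branching at each level yields uncountably many. The key local step inside $I_n$ is to select level-$k_1$ and level-$k_2$ sub-pieces of $C_1$ and $C_2$ whose enclosing sub-intervals of $I_n$ must overlap: the ``reach'' of a level-$k_i$ sub-piece of $C_i$ inside its enclosing scaled copy of $[0,1]$ is $\alpha_i$ times the scale, so $\alpha_1 + \alpha_2 \geq 1$ prevents the two rescaled pieces from sitting disjointly in $I_n$.

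The main obstacle is converting a real $x \in C_1 \cap C_2$ into infinitely many integers $n \in A_1 \cap A_2$. I would choose $K_1, K_2$ such that $g_1^{K_1}$ and $g_2^{K_2}$ are very close (via Diophantine approximation to $\log g_1 / \log g_2$ in the multiplicatively independent case; the dependent case $g_1 = h^a$, $g_2 = h^b$ reduces to a single base $h$ and is easier), and set $n := \lfloor g_1^{K_1} x \rfloor$. The base-$g_1$ digits of $n$ are then automatically the first $K_1$ base-$g_1$ digits of $x$, hence $< d_1$. The base-$g_2$ digits of $n$ agree with those of $g_2^{K_2} x$ up to an additive error of $O(1)$ coming from the discrepancy $g_1^{K_1}-g_2^{K_2}$, and this is where careful carry-propagation control becomes essential: one must exploit the branching freedom from Step~1 to ensure $x$ has no long strings of $(d_2-1)$'s in base $g_2$, so that the bounded additive error does not cascade up and corrupt the small-digit property. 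Letting $K_1 \to \infty$ then produces infinitely many distinct such $n$. Handling this carry interaction robustly---especially without strong multiplicative-independence assumptions---is the technical heart of the argument.
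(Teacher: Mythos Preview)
The paper does not give its own proof of this theorem; it is cited from \cite{EGRS}. The paper does, however, describe the method of \cite{EGRS} in Section~\ref{sec-sketch}: one begins with $g_1^N$ (whose base-$g_1$ digits are trivially small), and then works from the most significant base-$g_2$ digit downward, at each step adding a suitable multiple $c\cdot g_1^j$ with $0\leq c<d_1$ to push the offending base-$g_2$ digit below $d_2$. The condition $\alpha_1+\alpha_2\geq 1$ is precisely what guarantees that such a $c$ always exists. This is a purely integer-level, top-down construction.

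Your route is genuinely different: you pass to the real digit Cantor sets $C_1,C_2\subset[0,1]$, argue that $\alpha_1+\alpha_2\geq 1$ forces $C_1\cap C_2\neq\emptyset$ via nested intervals, and then try to extract integers from a point $x\in C_1\cap C_2$. The first step is reasonable and in the spirit of Newhouse-type gap arguments. The second step, however, has a real gap. You claim that with $g_1^{K_1}\approx g_2^{K_2}$ the base-$g_2$ digits of $n=\lfloor g_1^{K_1}x\rfloor$ agree with those of $g_2^{K_2}x$ up to an additive $O(1)$ error. But the actual discrepancy is
\[
\bigl|\,n - g_2^{K_2}x\,\bigr| \;=\; \bigl|g_1^{K_1}-g_2^{K_2}\bigr|\cdot x + O(1),
\]
and for multiplicatively independent $g_1,g_2$ one has $\bigl|g_1^{K_1}-g_2^{K_2}\bigr|\gg g_1^{K_1}/K_1^{C}$ (by Baker's theorem, or even cruder estimates), which is certainly not $O(1)$. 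An error of this size alters base-$g_2$ digits at essentially every scale up to $g_1^{K_1}/K_1^{C}$, not merely the bottom few, so the carry-propagation control you propose (avoiding long runs of $d_2-1$) is not enough to repair it.

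The natural salvage is to run your nested-interval construction directly at the integer level rather than in $[0,1]$: at stage $n$ choose compatible base-$g_1$ and base-$g_2$ digit blocks so that the partial integers agree. Carried out carefully, this \emph{is} the argument of \cite{EGRS} as sketched in the paper, and the condition $\alpha_1+\alpha_2\geq 1$ enters in exactly the way you anticipated in your Step~1.
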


In particular, for any distinct primes $p_1,p_2$ there exist infinitely many $n$ such that $\binom{2n}{n}$ is coprime to $p_1p_2$, resolving the $r=2$ analogue of Graham's conjecture. Theorem~\ref{th-egrs} is almost the final word on the case $r=2$, except that the assumption \eqref{eq-cond} is stronger than the conjectured sufficient condition \eqref{eq-conjcond}, which here is equivalent to
\[\frac{\log(\lceil \kappa_1g_1\rceil)}{\log g_1}+\frac{\log(\lceil \kappa_2g_2\rceil)}{\log g_2}>1.\]
(The condition \eqref{eq-cond} is indeed stronger since $\frac{\log x}{\log g}>\frac{x-1}{g-1}$ for $1<x< g$.) It would be of interest even for $r=2$ to prove that condition \eqref{eq-conjcond} is sufficient.

As a concrete challenge, can one prove the existence of infinitely many $n$ whose digits are all $\in \{0,1\}$ in both bases $3$ and $5$ simultaneously? Conjecture~\ref{conj} predicts that this is true, since $\log_3(3/2)+\log_5(5/2)=0.938\cdots$, but this is not covered by Theorem~\ref{th-egrs}, since $\frac{1}{2}+\frac{1}{4}=0.75<1$.

\subsection*{Acknowledgements} Thomas Bloom is supported by a Royal Society University Research Fellowship.

\section*{Structure of the paper}
In Section~\ref{sec-sketch} we will give an overview of the proof of Theorem~\ref{th-ologn}, in particular motivating the form of the main technical proposition which will be used in the semi-constructive proof. In Section~\ref{sec-tech} we will state this proposition precisely as Proposition~\ref{prop-main}, and formally deduce Theorem~\ref{th-ologn}. 

The proof of Proposition~\ref{prop-main} will occupy the rest of the paper. In Section~\ref{sec-fourier} we prove it using Fourier analysis, assuming further lemmas on the separation of power sums (proved in Section~\ref{sec-sep}) and the behaviour of large exponential sums over integers with small digits (proved in Section~\ref{sec-spec}). 

The reader generally interested in additive problems concerning integers with small digits may like to jump immediately to Section~\ref{sec-spec} and peruse the lemma and proof therein, which we expect will be of wider use.

Finally, in Section~\ref{sec-imp} we will sketch how to prove a quantitative form of Theorem~\ref{th-ologn}, and discuss some connections with Schanuel's conjecture.

\section*{Notation and conventions}
For $x\in \bbr$ we write $\norm{x}\in (-1/2,1/2]$ for the distance from $x$ to the nearest integer. We also write $\lfloor x\rfloor$ for the greatest integer $n\leq x$ and $\{x\}=x-\lfloor x\rfloor$ for the fractional part of $x$.

We use the vector notation $\vec{k}=(k_1,\ldots,k_d)$ (the dimension $d$ will always be clear from context). We will use this notation for both elements of $\bbr^d$, $\bbz^d$, and $(\bbr/\bbz)^d$.

We identify $\bbr/\bbz$ with $[0,1)$ where convenient. If $\psi\in L^1(\bbr/\bbz)$ then we define the Fourier transform $\widehat{\psi}:\bbz\to \bbc$ by
\[\widehat{\psi}(k)=\int_0^1 \psi(t)e(-kt)\td t,\]
where $e(x)=e^{2\pi ix}$. By a slight abuse of notation when $A\subset (\bbr/\bbz)^d$ is a finite set we also write
\[\widehat{1_A}(\vec{k})=\sum_{\vec{a}\in A}e(-\vec{a}\cdot \vec{k})\]
for $\vec{k}\in \bbz^d$.

\section{Sketch of the proof}\label{sec-sketch}

We will prove Theorem~\ref{th-ologn} in a semi-constructive way. The idea is to begin with some candidate number and successively alter blocks of digits in each base to attempt to make the digits smaller, while avoiding affecting too many of the digits in other bases, or the digits in higher blocks in that same base.

This is an extension of the method used by Erd\H{o}s, Graham, Ruzsa, and Straus \cite{EGRS} in the case $r=2$ (see Theorem \ref{th-egrs}) and of Croot, Mousavi, and Schmidt \cite{CMS}. 

To illustrate the method of Erd\H{o}s, Graham, Ruzsa, and Straus when $r=2$, we will work with the bases $g_1 = 3$ and $g_2 = 5$, and choose $\kappa_1 = \kappa_2 = 1/2$.  In other words, we seek integers $n$ such that each of the base $3$ digits are $0$ or $1$ and each of the base $5$ digits are $0,1,$ or $2$.  

As mentioned above, we adopt a constructive approach, starting with a number like $3^{12}=(1000000000000)_3$, which has the form we seek in base $3$.  Unfortunately, in base $5$ this number is $(114001231)_5$ (that is, 
$3^{12} = 5^8 + 5^7 + 4\cdot 5^6 + 5^3 + 2\cdot 5^2 
+ 3\cdot 5 + 1$).  Working from most to least significant digits, the first problematic digit here is the $4$ (contributing the $4 \cdot 5^6$).  By adding a lower power of $3$ to our initial $3^{12}$, we can make this digit smaller, at the expense of making the digit $1$ corresponding to the $5^7$ into a $2$.  Specifically, since $3^9=(1112213)_5$, adding $3^9$ to produce $3^{12} + 3^9$ we get
\begin{align*}
(1001000000000)_3
&=(114001231)_5+(001112213)_5\\
&=(120113444)_5.
\end{align*}
Continuing in this manner, adding smaller and smaller powers of $3$ to clear out large digits in the base $5$ expansions of those numbers, we arrive at the number $(1001000101110)_3$, which has base $5$ expansion $(120121111)_5$. It is not immediately obvious, of course, that such a procedure is always possible, i.e. that one can remove a large base $5$ digit without creating large base $3$ digits. That this is always possible is proved using elementary number theory in \cite{EGRS} (and makes essential use of the condition \eqref{eq-cond}).

When $r=2$ (and condition \eqref{eq-cond} is satisfied) this constructive method always works. When dealing with three or more bases simultaneously, however, problems quickly arise: when fixing a problematic large digit in one base there are now at least two other bases to worry about, and the elementary counting argument of \cite{EGRS} no longer suffices to ensure that all the bases can be controlled simultaneously.

In the present work, instead of starting with some $g_i^N$, which has base $g_i$ expansion $(1000\cdots 0)_{g_i}$, and then adding progressively lower and lower order (than $N$) base $g_i$ digits so as to make the higher-order digits in the other bases small, we work in some large `universal' base $L = \ell^h$, for some $h \geq 1$, where gcd$(\ell, g_1g_2\cdots g_r)=1$.  (There is no other restriction on $\ell$, and in particular one can simply take $\ell=2$ when all $g_i$ are odd.)  

We will work relative to this large base $L$, and choose each new base $L$ digit that we add to force long blocks of digits in the bases $g_1,...,g_r$ to be small simultaneously. A precise description of what is required is given in Proposition \ref{prop-main}:  when choosing the base $L$ digit corresponding to $L^k$ we can ensure that all the digits $a_i$ in a base $g_j$ can be made to satisfy $a_i < \kappa_j g_j$ so long as $g_j^i \in [C L^k, C^{-1} L^{k+1}]$.  The parameter $C$ here will be `small' compared to $L$, so that we can get almost all of the base $g_j$ digits corresponding to a single base $L$ digit to be $< \kappa_j g_j$. A significant complication arises from the fact that at the point we need to choose any base $L$ digit all higher base $L$ digits have already been fixed. This means that when choosing the current base $L$ digit, with the goal of fixing the corresponding block of base $g_j$ digits to be small, we are not starting from all zeros in base $g_j$ but from some unknown `noise', the result of the choices made for higher $L$ digits.

This unknown noise is represented by the parameter $\beta$ in Proposition~\ref{prop-main}, and it is vital that this proposition should hold uniformly for all $\beta\in \bbr$. Roughly speaking then, Proposition~\ref{prop-main}, the main technical result of this paper, states that (assuming various conditions), for almost all $n$, given any $\beta\in \bbr$, we can shift $\beta$ by some quantity $O(HL^{k})$ (where $H$ is some parameter slightly larger than $L$) such that almost all of the corresponding block of base $g_j$ digits in $[CL^k,C^{-1}L^{k+1}]$ are small, for all bases $g_j$ simultaneously. (Importantly, since we are only altering $\beta$ by something $O(HL^{k})$ almost all of the higher digits of $\beta$ in every base $g_j$ are unaffected, and hence once they are small they remain small throughout the process.)

It is worth mentioning where the exceptional set of at most $\varepsilon \log n$ digits in Theorem \ref{th-ologn} in each of the bases comes from.  There are two sources, both coming from Proposition \ref{prop-main}.  The first source is the fact that we have the interval $[C L^k, C^{-1} L^{k+1}]$ instead of an interval $[L^k, L^k H]$, say, where that extra `padding' factor $C$ means we lose control over the digits with $g_i^j \in [L^k, C L^k)$ and $(C^{-1}L^{k+1}, L^k H]$ -- the number of digits we cannot control in this way is 
\[\ll\frac{\log(CH/L)}{\log L}\log n.\]

The other main source of potential bad digits comes from the fact that we cannot ensure a suitable modification is possible for all base $L$ digits, but only almost all. The bound on the number of possible bad $L$ digits, where we cannot control anything, given in Proposition~\ref{prop-main} is
\[\ll (\log L)^{O(1)}(L/H)^{1/r}\log n,\]
each such bad block contributing $O(\log L)$ many bad base $g_i$ digits.

The choices of $L$ and $H$ are therefore a little delicate; to ensure that the number of bad digits is at most $\epsilon \log n$ in total we choose $H=L^{1+O(\epsilon)}$ and $L=\epsilon^{-O(\epsilon^{-1}r)}$.

Finally, we conclude by discussing the proof of Proposition \ref{prop-main}. After dilating the scale under consideration so that we are working in $[0,1]$, and writing $\vec{x}\in (\bbr/\bbz)^r$ for the rescaled $L^k$ (with the direction $i$ corresponding to the behaviour in base $g_i$) what we need to know is essentially whether a fixed $\vec{\beta}\in (\bbr/\bbz)^r$ can be shifted by some constant multiple of $\vec{x}$ to lie in $A+I$, where $A\subseteq [0,1]^r$ is a finite subset, with the component in the $i$ direction corresponding to those numbers with small digits in base $g_i$, and $I$ is some small box around the origin. 

This is proved by an application of Fourier analysis in $(\bbr/\bbz)^r$ -- the point is that the required statement is `generically true', and the only exceptional $\vec{x}$ must have some unusual correlations with large Fourier coefficients of the set $A$. By bounding the number of such exceptions we can control the number of potential `bad' $\vec{x}$, and hence the number of potential bad base $L$ digits, and complete the proof of Proposition~\ref{prop-main}.

 Bounding the number of such exceptional $\vec{x}$ requires a number of auxiliary lemmas, some of which have variants that appeared in \cite{CMS} (although the versions in the present work are much more flexible and general).  
 
 The main new ingredient that allows us to get stronger bounds than this previous work is Lemma \ref{lem:spec}, which gives an upper bound on the number of places where a certain exponential sum over integers with small base $g$ digits is larger in magnitude than a given fraction 
$\eta$ of its maximum possible value (where this maximum value is $|A|$) -- that is, we have an exponential sum over numbers of the form $\sum_{0 \leq i < R} c_i g^i$, where $0 \leq c_i < t$. (For comparison the analysis in \cite{CMS} bounded the number of such large exponential sums by an application of Parseval's identity, and did not exploit the specific structure of the set $A$.)  These bounds are proved using the fact that this exponential sum can be written as a Riesz product over geometric progressions. Analyzing the integers $k$ where this product can be large in terms of the base $g$ digits of that $k$, it is found that all but a small number of these digits must lie inside some small set.  Bounding the number of such patterns of base $g$ digits of these numbers $k$ then gives the lemma.

Establishing control on exponential sums over sets of integers with restricted digits has also played a significant role in a number of other problems; we mention in particular the work of Fouvry and Mauduit \cite{FoMa05} on the additive behaviour of sets of integers omitting specified digits, and the work of Maynard \cite{ma, granville} on the existence of primes omitting specified digits. To the best of our knowledge, however, a result like Lemma \ref{lem:spec} which controls the exponential sum over the set of integers with only \emph{small} digits has not before appeared in the literature.

\section{Main technical result}\label{sec-tech}
We will now state our main technical result, and show how it implies Theorem~\ref{th-ologn}. The proof of Proposition~\ref{prop-main} will take up the remainder of the paper.

\begin{proposition}\label{prop-main}
Let $r\geq 1$ and $g_1,\ldots,g_r\geq 2$ be integers such that $g_i^{a_i}\neq g_j^{b_j}$ for all $i\neq j$ and integers $a_i,b_j\geq 1$, with associated $\kappa_1,\ldots,\kappa_r\in(0,1]$ such that 
\[
\sum_{1\leq j\leq r} \log_{g_j}\brac{\frac{10g_j}{\lceil \tfrac{\kappa_j}{32r^5} g_j\rceil}}< \frac{1}{2r}.\]
Let $\ell\geq 2$ satisfy $(\ell,g_1\cdots g_k)=1$. There exists a constant $C>0$ (depending on $\ell,g_j,\kappa_j$) such that the following holds.

Let $H\geq L\geq C$, and suppose that $L$ is a power of $\ell$. If $N$ is sufficiently large (depending on $\ell,g_j,\kappa_j,H,L$) then, for all except at most $(\log L)^C(L/H)^{1/r}N$ many $n\leq N$, for any $\beta\in \bbr$, there exists an integer $s_{n,\beta}\in [1,H]$ such that, for all $1\leq j\leq r$, if 
\[s_{n,\beta}L^n+\beta=\sum_k a_kg_j^k\quad\textrm{ with }\quad 0\leq a_k<g_j\]
then
\[0\leq a_k< \kappa_j g_j\quad\textrm{ for }\quad g_j^k\in [CL^n,\tfrac{1}{C}L^{n+1}].\]
\end{proposition}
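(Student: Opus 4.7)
The plan is to prove Proposition \ref{prop-main} via Fourier analysis on $(\bbr/\bbz)^r$, as sketched in Section \ref{sec-sketch}. First, I would translate the digit condition into the containment of a torus point in a product of Cantor-like sets. For each $j$ and each $n$, let $K_j^\pm$ be the integers with $g_j^{K_j^-}\approx CL^n$ and $g_j^{K_j^+}\approx L^{n+1}/C$, let $d_j=K_j^+-K_j^-$, and define
\[A_j=\bigg\{\sum_{i=1}^{d_j}\frac{a_i}{g_j^i}:0\leq a_i<\kappa_j g_j\bigg\}\subset \bbr/\bbz,\qquad I_j=[0,g_j^{-d_j}).\]
Then the digit bound on $sL^n+\beta$ in base $g_j$ is equivalent to $\{(sL^n+\beta)/g_j^{K_j^+}\}\in A_j+I_j$. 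Writing $\vec{y}_n=\scaled{L^n/g_j^{K_j^+}}_{j}\bmod 1$, $\vec{\beta}'=\scaled{\beta/g_j^{K_j^+}}_{j}\bmod 1$, $A=\prod_j A_j$ and $I=\prod_j I_j$, the proposition reduces to showing that for all but $\ll (\log L)^C(L/H)^{1/r}N$ values of $n\leq N$ and every $\vec{\beta}'\in (\bbr/\bbz)^r$, there exists $s\in[1,H]$ with $s\vec{y}_n+\vec{\beta}'\in A+I$.

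Next, I would apply Fourier inversion on $(\bbr/\bbz)^r$ to write
\[\sum_{s=1}^H\ind{A+I}(s\vec{y}_n+\vec{\beta}')=H|A||I|+\sum_{\vec{k}\neq 0}\widehat{\ind{A}}(\vec{k})\widehat{\ind{I}}(\vec{k})\,e(\vec{k}\cdot\vec{\beta}')\sum_{s=1}^He(s\vec{k}\cdot\vec{y}_n).\]
Bounding the inner geometric sum by $\min(H,\norm{\vec{k}\cdot\vec{y}_n}^{-1})$, the proof reduces to showing that
\[E(n):=\sum_{\vec{k}\neq 0}\Abs{\widehat{\ind{A}}(\vec{k})\widehat{\ind{I}}(\vec{k})}\min\brac{H,\norm{\vec{k}\cdot\vec{y}_n}^{-1}}<\tfrac{1}{2} H|A||I|\]
for all but $(\log L)^C(L/H)^{1/r}N$ values of $n\leq N$. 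Note that this bound is uniform in $\vec{\beta}'$, so a single pointwise estimate handles every choice of $\beta$.

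To control $E(n)$, I would dyadically decompose the $\vec{k}$-sum by the value $\eta=\Abs{\widehat{\ind{A}}(\vec{k})}/|A|$. Within each dyadic shell, the new spectral estimate Lemma \ref{lem:spec} (proved via the Riesz-product structure of $\ind{A}$) both bounds the number of such $\vec{k}$ and forces their digits in each base to lie in a sparse alphabet; the contribution of any fixed $\vec{k}$ is truly large only when $\norm{\vec{k}\cdot\vec{y}_n}\ll H^{-1}$. For each such $\vec{k}$, the power-sum separation estimates of Section \ref{sec-sep}, which exploit the multiplicative independence hypothesis $g_i^{a_i}\neq g_j^{b_j}$, bound the number of $n\leq N$ where $\vec{k}\cdot\vec{y}_n$ is anomalously close to $0$. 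Summing these contributions across dyadic scales yields the bound on the exceptional set.

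The main obstacle will be balancing these two ingredients. Lemma \ref{lem:spec} gains from the sparsity of the digit alphabet (which is what forces the quantitative hypothesis on the $g_j$) but the count of large-spectrum $\vec{k}$ still grows as a power of $L$, while the power-sum separation yields savings of order $H^{-1}$ per coordinate but only on average in $n$. The exponent $1/r$ in the claimed bound $(L/H)^{1/r}$ should arise precisely from the fact that bad $n$ correspond to simultaneous smallness of $\vec{k}\cdot\vec{y}_n$ viewed as an $r$-coordinate constraint, via a H\"older-type argument across the bases; the logarithmic factor $(\log L)^C$ absorbs the enumeration of dyadic shells and frequency patterns supplied by Lemma \ref{lem:spec}. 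Putting these pieces together so that the error $E(n)$ remains below the main term $H|A||I|$ outside an exceptional set of the stated size is where the condition on $\sum_j \log_{g_j}(10g_j/\lceil \tfrac{\kappa_j}{32r^5}g_j\rceil)<1/(2r)$ is ultimately used in full force.
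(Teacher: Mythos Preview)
Your overall architecture—recast the digit condition as membership in a product set on $(\bbr/\bbz)^r$, expand by Fourier, feed large Fourier coefficients into Lemma~\ref{lem:spec}, and count bad $n$ via the separation estimates of Section~\ref{sec-sep}—is the same as the paper's. But the plan as written has a genuine gap: you work with $A+I$ and a \emph{single} power of $\widehat{1_A}$, whereas the paper works with $hA+I$ for a large integer $h$ (and correspondingly takes $A_j$ with digits $<\tfrac{\kappa_j}{h}g_j$, so that $hA_j$ recovers the desired digit bound). This amplification is not cosmetic. In your error term
\[
E(n)=\sum_{\vec{k}\neq 0}\bigl|\widehat{1_A}(\vec{k})\bigr|\,\bigl|\widehat{1_I}(\vec{k})\bigr|\,\min\bigl(H,\norm{\vec{k}\cdot\vec{y}_n}^{-1}\bigr),
\]
the factor $|\widehat{1_A}(\vec{k})|$ has no decay in $\vec{k}$ (it is a finite exponential sum bounded by $|A|$), and $|\widehat{1_I}(\vec{k})|$ decays only like $1/|k_j|$, so $\sum_{\vec{k}}|\widehat{1_A}||\widehat{1_I}|$ is not even absolutely convergent. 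The paper replaces $1_I$ by a bump $\psi$ with rapid Fourier decay (Appendix~\ref{app}) to truncate to $|k_j|\ll(\log\tfrac{1}{\delta})^2\delta_j^{-1}$, and—more importantly—replaces $|\widehat{1_A}|$ by $|\widehat{1_A}|^h$. That $h$th power is precisely what allows one to discard all $\vec{k}$ except those in the large spectrum $|\widehat{1_A}(\vec{k})|\gg\|\vec{k}\|_\infty^{-(r+1)/h}|A|$, which is the only regime where Lemma~\ref{lem:spec} is invoked (with $\delta\asymp r/h$). Your dyadic decomposition in $\eta$ is an attempt to substitute for this, but with only $\eta^1$ in the summand the small-$\eta$ shells are not suppressed and the sum over shells will not close. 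The choice $h=32r^5$ is exactly what produces the $32r^5$ in the hypothesis of the proposition; your setup with $t_j=\lceil\kappa_j g_j\rceil$ never touches that constant and so cannot be using the hypothesis as stated.

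Two smaller points. First, the paper also amplifies in the $s$-variable (summing over $s_1+\cdots+s_\ell$ with $\ell\asymp\log(1/\alpha)$), which is what yields the clean restriction $\norm{\vec{k}\cdot\vec{x}}\ll\log(1/\alpha)/H$ rather than a continuum of scales; your single geometric sum does not give this cutoff. Second, the exponent $1/r$ does not come from a H\"older argument across the $r$ bases: it comes directly from Lemma~\ref{lem:badbound}, which for a \emph{single} $\vec{k}$ bounds the number of $n\le N$ with $\norm{\vec{k}\cdot\vec{y}_n}\le\epsilon$ by $\ll\epsilon^{1/r}N$ (ultimately via Lemma~\ref{lem-lb}, since an $r$-term exponential sum cannot be small on too many well-separated points). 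The final count is then (size of large spectrum)~$\times$~(bad $n$ per frequency), summed over dyadic $M$; the hypothesis $\sum_j\log_{g_j}(10g_j/t_j)<1/(2r)$ ensures this geometric sum in $M$ converges.
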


This statement is quite technical and difficult to parse. The point of the conclusion is that, for almost all `blocks' of digits of length $\approx \log L$, any $\beta$ can be shifted by some integer of scale $O(HL^n)$ to ensure that all of its base $g_j$ digits in that block (except the fringes) are small, simultaneously for all $j$. Furthermore since we are shifting by an integer of size $O(HL^n)$ very few of the higher digits will be affected. 

We can then work our way downwards along the blocks, beginning with the highest digits, repeatedly shifting to ensure that all of the digits except those in the fringes of each block (and perhaps all digits in any of the exceptional blocks) are small in all bases simultaneously.

Note that we need to take $L$ smaller than $H$ by at least $(\log L)^{Cr}$ or else the conclusion is trivial. To get an idea about the scale of parameters, for our application we will choose $\ell\geq 2$ to be minimal such that $(\ell,g_1\cdots g_k)=1$, and then choose $H$ to be some large constant depending only on $\epsilon$ and $L\approx H^{1-O(\epsilon)}$. With this choice of parameters the number of exceptional blocks $n\leq N$ is $\ll H^{-O(\epsilon/r)}N\leq \epsilon N$ for suitable choice of $H$.

The proof of Proposition~\ref{prop-main} will take the majority of the paper. In this section we will show how it implies Theorem~\ref{th-ologn}. 

\begin{proof}[Proof of Theorem~\ref{th-ologn}]
Choose $\ell$ minimal so that $(\ell,g_1\cdots g_k)=1$. We regard $\ell, g_1,\ldots,g_r$ and $\kappa_1,\ldots,\kappa_r$ as fixed -- in particular any implicit constant may depend on them. Let $\epsilon>0$. Let $H\geq L$ be chosen later (depending on $\epsilon$) but large enough so that the conclusion of Proposition~\ref{prop-main} holds, and let $N$ be sufficiently large (depending on $\epsilon,H,L$). 

It suffices to construct some integer $b\in [L^N,HL^{N+1}]$ such that, for $1\leq j\leq r$, all but at most $\epsilon \log b$ of the base $g_j$ integers of $b$ are $< \kappa_jg_j$. We will construct $b$ inductively in a `top-down' fashion, by inductively choosing $s_N,\ldots,s_{0}\in [0,H]$. It is convenient to introduce the notation
\[b_{\geq d}=\sum_{d\leq i\leq N}s_iL^{i}.\]
Our final $b$ will then be $b=b_{\geq 0}$. Let $s_N=1$ and, for $0\leq n< N$, 
\begin{enumerate}
\item if the conclusion of Proposition~\ref{prop-main} fails for $n$ then $s_n=0$, and
\item if the conclusion of Proposition~\ref{prop-main} holds for $n$ then let $1\leq s_n \leq H$ be the minimal integer thus provided for $\beta=b_{\geq n+1}$, so that for $1\leq j\leq r$ if 
\[b_{\geq n}=s_nL^{n}+b_{\geq n+1}=\sum_k a_kg_j^{k}\quad\textrm{ with }\quad 0\leq a_k<g_j\]
then 
\[0\leq a_k< \kappa_j g_j\quad\textrm{ for }\quad g_j^k\in [CL^n,\tfrac{1}{C}L^{n+1}].\]
\end{enumerate}
For brevity, let $D^\flat$ denote those $0\leq n<N$ where the first alternative holds and $D^\sharp$ those $n$ where the second holds. In particular, note that $\lvert D^\flat\rvert \ll (\log L)^C(L/H)^{1/r}N$.

We first note that, as required, 
\[L^{N}\leq b \leq HL^{N+1},\]
and hence in particular $N=\log_Lb+O_{H,L}(1)$. We now consider the intervals 
\[I_{n}=[C\tfrac{H}{L}L^{n}, \tfrac{1}{C}L^{n+1}]\]
for $0\leq n<N$. Note that if $g_j^{u_j}$ is the highest power of $g_j$ which appears in the base $g_j$ expansion of $b$ then
\[g_j^{u_j}\leq b\leq HL^{N+1}.\]
Therefore if $g_j^u$ corresponds to a non-zero digit in the base $g_j$ expansion of $b$ then either $g_j^u\in I_{n}$ for some $0\leq n<N$, or
\[g_j^u\in [1,C\tfrac{H}{L})\cup (\tfrac{1}{C}L^{N},HL^{N+1}]\cup
\bigcup_{1\leq n< N}(\tfrac{1}{C}L^{n},C\tfrac{H}{L}L^{n}].\]
The number of digits in the base $g_j$ expansion of $b$ corresponding to $g_j^u\not\in \cup I_n$ is therefore at most
\[\ll \log(CH/L)N+O_{H,L}(1)\ll \frac{\log(CH/L)}{\log L}\log b+O_{H,L}(1).\]

Furthermore, since each $I_n$ contains at most $O(\log L)$ many $g_j^u$, the number of digits corresponding to $g_j^u\in I_n$ for some $n\in D^\flat$ is at most
\[\ll (\log L)^{C+1}(L/H)^{1/r}N\ll (\log L)^{C}(L/H)^{1/r}\log b.\]
In total, therefore, the number of digits which do \emph{not} correspond to  some $g_j^u\in I_n$ for some $n\in D^\sharp$ is 
\[\ll \brac{\frac{\log (CH/L)}{\log L}+(\log L)^{C+1}(L/H)^{1/r}}\log b+O_{H,L}(1).\]
If we choose $L=\lfloor H^{1-C'\epsilon}\rfloor$ and $H=\lceil \epsilon^{-C'r\epsilon^{-1}}\rceil$ for some large constant $C'>0$, and take $N$ sufficiently large, then this is at most $O(\epsilon \log b)$.

It remains to prove that the digits corresponding to $g_j^u\in I_n$ for $n\in D^\sharp$ are all $<\kappa_j g_j$. By construction if 

\[b_{\geq n}=\sum_k a_kg_j^{k}\quad\textrm{ with }\quad 0\leq a_k<g_j\]
and 
\[g_j^k\in [CL^{n},\tfrac{1}{C}L^{n+1}]\supseteq I_n\]
then $a_k< \kappa_jg_j$. Now note that
\[b-b_{\geq n}=\sum_{0\leq i\leq n-1}s_iL^i\leq H\frac{L^{n}}{L-1}\]
and the general fact (e.g. provable by induction) that, for any $\ell\geq 1$, if $r< g_j^\ell$ and $s$ is such that the digit corresponding to $g_j^{\ell}$ in $s$ is $\leq g_j-2$, then the digits corresponding to $g_j^k$ are identical in $s$ and $r+s$ for all $k>\ell$. In particular, since we can assume that $\kappa_jg_j\leq g_j-2$, the digits corresponding to $g_j^k$ in $b$ and $b_{\geq n}$ are identical provided
\[g_j^k> g_j\tfrac{H}{L-1}L^n.\]
We can assume that $C$ is large enough such that the right-hand side is $\leq C\tfrac{H}{L}L^n$, and hence the digits corresponding to $g_j^k\in I_n$ are identical in both $b$ and $b_{\leq n}$, and hence are also $<\kappa_jg_j$ in $b$, as required.
\end{proof}

\section{An application of Fourier analysis}\label{sec-fourier}
Proposition~\ref{prop-main} will be proved by the circle method -- that is, using Fourier analysis and a careful study of where the relevant Fourier transforms can be large. In particular, we will use Fourier analysis to prove the following technical result and then explain how it implies Proposition~\ref{prop-main}, assuming further lemmas which will be proved in later sections.

\begin{proposition}\label{prop-fourier}
Let $I_1,\ldots,I_r\subset \bbr/\bbz$ be intervals of length $\delta_1,\ldots,\delta_r$ respectively, and write $I=I_1\times \cdots \times I_r$ and $\delta=\prod_{i=1}^r\delta_i$. Let $A\subset (\bbr/\bbz)^r$ be a finite set such that $\vec{a}-\vec{b}\not\in \prod_i[-\delta_i/2,\delta_i/2]$ for all $\vec{a}\neq \vec{b}\in A$, and write $\alpha=\delta\abs{A}$. Let $h\geq 3$ be an integer.

For any $H\geq 1$ and $\vec{x}\in(\bbr/\bbz)^r$, either
\begin{enumerate}
\item for any $\vec{\beta}\in (\bbr/\bbz)^r$ there exists $1\leq s\leq H$ such that\footnote{Here $hA$ denotes the $h$-fold iterated sumset of $A$, that is, all $x\in (\bbr/\bbz)^r$ of the shape $a_1+\cdots+a_h$ for some $a_i\in A$.} $s\vec{x}+\vec{\beta}\in hA+I$, or
\item there exists some $\vec{k}\in \bbz^r\backslash \{0\}$ such that

\begin{enumerate}
\item $\Abs{k_i}\ll_r (\log \frac{1}{\delta})^2\delta_i^{-1}$ for $1\leq i\leq r$,
\item \[\Abs{\widehat{\ind{A}}(\vec{k})}\gg_r \Norm{\vec{k}}_\infty^{-\frac{r+1}{h}}\abs{A},\]
and
\item $\Norm{\vec{k}\cdot \vec{x}}\ll_r \log(1/\alpha)H^{-1}$.
\end{enumerate}
\end{enumerate}
\end{proposition}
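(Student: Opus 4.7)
The proof is a Fourier-analytic dichotomy in the style of the circle method. My plan is to argue by contrapositive: suppose alternative~(1) fails, so there is some $\vec\beta_0$ with $s\vec{x}+\vec\beta_0\notin hA+I$ for every $1\leq s\leq H$, and produce a frequency $\vec{k}$ witnessing alternative~(2). Fix a smooth non-negative bump $\psi=\prod_{i=1}^{r}\psi_i$ supported in $I$ with $\int\psi_i\asymp\delta_i$ and $\widehat{\psi_i}$ concentrated on $\Abs{k_i}\ls 1/\delta_i$ (taking $\psi_i$ to be an $m$-fold self-convolution of an indicator, with $m$ depending mildly on $\log(1/\delta)$, so $\Abs{\widehat{\psi_i}(k_i)}$ decays faster than any preassigned polynomial outside the window). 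Reading $\ind{A}$ as the sum of Dirac masses at the points of $A$, the function $\phi=\ind{A}^{*h}*\psi$ is non-negative and supported in $hA+I$, so
\[F(\vec\beta)=\sum_{s=1}^{H}\phi(s\vec{x}+\vec\beta)\]
vanishes at $\vec\beta_0$ by hypothesis. Fourier expansion on $(\bbr/\bbz)^r$ gives $F(\vec\beta)=M+E(\vec\beta)$, where $M=H\abs{A}^h\int\psi\gs_r H\abs{A}^h\delta$ is the $\vec{k}=\vec 0$ main term and
\[E(\vec\beta)=\sum_{\vec{k}\neq\vec 0}\widehat{\ind{A}}(\vec{k})^{h}\widehat{\psi}(\vec{k})D_H(\vec{k}\cdot\vec{x})e(\vec{k}\cdot\vec\beta),\]
with $D_H(\theta)=\sum_{s=1}^{H}e(s\theta)$ the Dirichlet kernel. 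Vanishing of $F$ at $\vec\beta_0$ forces $\Abs{E(\vec\beta_0)}\geq M$.

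Now partition the nonzero $\vec{k}$ by which of conditions (a), (b), (c) in alternative~(2) is violated; at least one must fail unless alternative~(2) already holds. For $\vec{k}$ violating (a), the rapid decay of $\widehat\psi$ outside the low-frequency window, combined with the trivial $\Abs{D_H}\leq H$, bounds the contribution by a small fraction of $M$ (this is where the $(\log(1/\delta))^2$ slack in the (a)-threshold is used to beat the smoothness tail of $\widehat\psi$). For $\vec{k}$ satisfying (a) but violating (b), one has $\Abs{\widehat{\ind{A}}(\vec{k})}^{h}<c_r^h\Norm{\vec{k}}_\infty^{-(r+1)}\abs{A}^h$, and since $\sum_{\vec{k}\neq\vec 0}\Norm{\vec{k}}_\infty^{-(r+1)}\ls_r 1$ is absolutely convergent in dimension $r$---this is precisely why the exponent $(r+1)/h$ appears in (b)---the resulting contribution is at most $c_r^h O_r(M)$, hence $\ll M$ for $c_r$ small enough depending on $r$.

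The main obstacle is the third case, $\vec{k}$ satisfying (a) and (b) but violating (c). Here $\norm{\vec{k}\cdot\vec{x}}>c_r\log(1/\alpha)/H$ gives $\Abs{D_H(\vec{k}\cdot\vec{x})}\leq 1/(2\norm{\vec{k}\cdot\vec{x}})\ll H/\log(1/\alpha)$, saving a factor $\log(1/\alpha)$ over trivial. To close the estimate one must control the number of $\vec{k}$ satisfying (a)$\cap$(b); the natural tool is Parseval applied to $\ind{A}*\ind{I'}$ for $I'=\prod_{i=1}^{r}[-\delta_i/4,\delta_i/4]$. The separation hypothesis makes this a $\{0,1\}$-valued indicator of the disjoint union $A+I'$, whence
\[\sum_{\vec{k}}\Abs{\widehat{\ind{A}}(\vec{k})}^{2}\Abs{\widehat{\ind{I'}}(\vec{k})}^{2}=\abs{A}\cdot\abs{I'},\]
and a dyadic argument exploiting the lower bound in (b) turns this into a count of $O_r(1/\alpha)$ on the (a)$\cap$(b) spectrum. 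The constant $\log(1/\alpha)$ in (c) is calibrated precisely so that the product of the three effects---the $\log(1/\alpha)$-saving in $\Abs{D_H}$, the $O_r(1/\alpha)$ spectrum bound, and the $\ell^{\infty}$-size of $\widehat\psi$ on the low-frequency window---stays below $M/4$. Balancing the constants delicately across all three cases is where the bulk of the technical work lies.

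With all three error estimates below $M/4$, the contribution to $E(\vec\beta_0)$ from $\vec{k}$ satisfying all of (a), (b), (c) must exceed $M/4>0$, and in particular at least one such $\vec{k}$ exists, producing the required witness for alternative~(2).
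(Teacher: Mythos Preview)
Your overall strategy---contrapositive, bump function, Fourier expansion, splitting nonzero frequencies according to which of (a), (b), (c) fails---matches the paper's, and your treatment of the (a) and (b) failures is essentially the same. The gap is in your handling of the case where (a) and (b) hold but (c) fails.

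With a single Dirichlet kernel $D_H$, failing (c) gives only $\Abs{D_H(\vec{k}\cdot\vec{x})}\ll H/\log(1/\alpha)$, a saving of merely $\log(1/\alpha)$ over the trivial bound $H$. Even granting your spectral count of $O_r(1/\alpha)$ for the set of $\vec{k}$ satisfying (a) and (b), bounding each remaining factor by $\Abs{\widehat{\ind{A}}(\vec{k})}^h\leq\abs{A}^h$ and $\Abs{\widehat{\psi}(\vec{k})}\leq\widehat\psi(0)$ yields a total contribution of order $M/(\alpha\log(1/\alpha))$, which exceeds $M$ whenever $\alpha$ is small. A logarithmic gain cannot defeat the polynomial loss $1/\alpha$; no adjustment of the implicit constants repairs this.

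The paper closes this gap by amplification: it replaces $\sum_{1\leq s\leq H}$ by an $\ell$-fold sum $\sum_{1\leq s_1,\ldots,s_\ell\leq H'}$ with $H'=\lfloor H/\ell\rfloor$ and $\ell=\lceil 2r+\log(1/\alpha)\rceil$, so that the kernel appears as $\Abs{f(\vec{k})}^\ell$ with $f(\vec{k})=\sum_{1\leq s\leq H'}e(s\,\vec{k}\cdot\vec{x})$. Then $\Norm{\vec{k}\cdot\vec{x}}>4/H'$ gives $\Abs{f(\vec{k})}^\ell\leq(H'/4)^\ell$, a saving of $4^{-\ell}\ll\alpha$ over $(H')^\ell$, which does beat the $1/\alpha$ loss coming from $\sum_{\vec{k}}\Abs{\widehat{\psi}(\vec{k})}\Abs{\widehat{\ind{A}}(\vec{k})}^h\leq 4^r\alpha^{-1}\abs{A}^h$ (this last inequality is exactly the separation-based bound underlying your Parseval idea). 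This $\ell$-fold amplification of the Dirichlet kernel is the missing ingredient in your argument.
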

\begin{proof}
Let $\ell \geq 1$ be determined later, and let $H'=\lfloor H/\ell\rfloor$. We fix $\vec{x}\in (\bbr/\bbz)^r$ and suppose that the first alternative fails, so that there exists some $\vec{\beta}\in(\bbr/\bbz)^r$ such that there is no  $1\leq s\leq H$ such that $s\vec{x}+\vec{\beta}\in hA+I$. Without loss of generality (shifting $\vec{\beta}$ if necessary) we can assume $I_i=[-\delta_i/2,\delta_i/2]$ for $1\leq i\leq r$. Let $J\geq 1$ be some integer parameter to be chosen later, and let $\psi_i:\bbr/\bbz\to \bbr$ be a function supported on $I_i$ such that
\begin{enumerate}
\item $\widehat{\psi_i}(0)=\| \psi_i\|_1=1$,
\item $0\leq \widehat{\psi_i}(k)\leq \min(1,(J^2/\delta_i k)^{2J})$ for all $k\in \mathbb{Z}$, and
\item $\sum_{k\in \bbz} \widehat{\psi_i}(k)=\psi_i(0)\leq 4/\delta_i$.
\end{enumerate}
(The existence of such a $\psi_i$ is a standard construction in Fourier analysis; details are given in Appendix~\ref{app}.) We write $\psi(\vec{x})=\psi_1(x_1)\cdots\psi_r(x_r)$. The basic theory of the Fourier transform implies that $\psi_i(x_i)=\sum_{k\in \bbz}\widehat{\psi_i}(k)e(kx_i)$ and hence
\[\psi(\vec{x}) = \sum_{\vec{k}\in \bbz^r}\widehat{\psi_1}(k_1)\cdots\widehat{\psi_r}(k_r)e(\vec{k}\cdot\vec{x})=\sum_{\vec{k}\in \bbz^r}\widehat{\psi}(\vec{k})e(\vec{k}\cdot\vec{x}),\]
say. Since the first alternative fails, and $\psi$ is supported on $I$, we know that $\psi(s\vec{x}+\vec{\beta}-\vec{a_1}-\cdots-\vec{a_h})=0$ for all $\vec{a_1},\ldots,\vec{a_h}\in A$ and $1\leq s\leq H$, and therefore
\[0=\sum_{\vec{a_1},\ldots,\vec{a_h}\in A}\sum_{1\leq s_1,\ldots,s_\ell\leq H'}\psi((s_1+\cdots+s_\ell)\vec{x}+\vec{\beta}-\vec{a_1}-\cdots-\vec{a_h}).\]
Inserting the above Fourier series for $\psi$ and rearranging, recalling that $\widehat{\ind{A}}(\vec{k})=\sum_{\vec{a}\in A}e(-\vec{k}\cdot\vec{a})$,
\[0=\sum_{\vec{k}\in \bbz^r}\widehat{\psi}(\vec{k})\widehat{\ind{A}}(\vec{k})^h\sum_{1\leq s_1,\ldots,s_\ell\leq H'}e(\vec{k}\cdot ((s_1+\cdots+s_\ell)\vec{x}+\vec{\beta})).\]
The term $\vec{k}=(0,\ldots,0)$ contributes $\abs{A}^h(H')^\ell$ to this sum. By the triangle inequality, therefore,  
\[\abs{A}^h(H')^\ell\leq \sum_{\vec{k}\in \bbz^r\backslash 0}\Abs{\widehat{\psi}(\vec{k})}\Abs{\widehat{\ind{A}}(\vec{k})}^h\Abs{f(\vec{k})}^\ell\]
where $f(\vec{k})=\sum_{1\leq s\leq H'}e(s(\vec{k}\cdot \vec{x}))$. We have, for any $M$ and fixed $1\leq i\leq r$,
\begin{align*}
\sum_{\substack{\vec{k}\in \bbz^r\backslash 0\\ \Abs{k_i} \geq M}}\Abs{\widehat{\psi}(\vec{k})}\Abs{\widehat{\ind{A}}(\vec{k})}^h\Abs{f(\vec{k})}^\ell
&\leq 
\abs{A}^h(H')^\ell\sum_{\substack{\vec{k}\in \bbz^r\backslash 0\\ \Abs{k_i} \geq M}}\Abs{\widehat{\psi}(\vec{k})}\\
&\leq \frac{1}{2}4^{r}\delta_i\delta^{-1}\abs{A}^h(H')^\ell\sum_{k\geq M}(J^2/\delta_i k)^{2J}.
\end{align*}
The sum here is, using the bound $\sum_{k\geq M}k^{-2J}\leq 2J(2/M)^{2J-1}$,
\[(J^2/\delta_i)^{2J}\sum_{k\geq M}k^{-2J}\leq 2J^3\delta_i^{-1}(2J^2/\delta_iM)^{2J-1}.\]
If we take $M=8J^2\delta_i^{-1}$ then this is at most $8J^3\delta_i^{-1}4^{-2J}$. In particular, if we choose $J=C\lceil r+\log\delta^{-1}\rceil$ for some large absolute constant $C$ then the total contribution from $\vec{k}$ with $\lvert k_i\rvert \geq 8J^2\delta_i^{-1}$ for some $1\leq i\leq r$ is at most $\frac{1}{2}\abs{A}^h(H')^{\ell}$. 

Furthermore, since $\vec{a}-\vec{b}\not\in I$ for $\vec{a}\neq \vec{b}\in A$,
\begin{align*}
\sum_{\vec{k}}\Abs{\widehat{\psi}(\vec{k})}\Abs{\widehat{\ind{A}}(\vec{k})}^h
&\leq \abs{A}^{h-2}\sum_{\vec{a},\vec{b}\in A}\sum_{\vec{k}} \widehat{\psi}(\vec{k})e(\vec{k}\cdot (\vec{a}-\vec{b}))\\
&= \abs{A}^{h-2}\sum_{\vec{a},\vec{b}\in A}\psi(\vec{a}-\vec{b})\\
&\leq 4^r\delta^{-1}\abs{A}^{h-1}=4^r\alpha^{-1}\abs{A}^h,
\end{align*}
and hence since $\Abs{f(\vec{k})}\leq \min(H',1/\Norm{\vec{k}\cdot \vec{x}})$, the contribution from those $\vec{k}$ such that $\Norm{\vec{k}\cdot \vec{x}}^\ell\geq 4^{r+1}\alpha^{-1}/(H')^\ell$ is at most $\frac{1}{4}\abs{A}^h(H')^\ell$. 

In particular, if $\ell=\lceil 2r+\log(1/\alpha)\rceil$, say, then we can discard the contribution from those $\vec{k}$ such that $\| \vec{k}\cdot\vec{x}\| \geq 4/H'=4/\lfloor H/\ell\rfloor$. 

Therefore, if $\Gamma'$ is the set of all $\vec{k}\neq 0$ such that $\Abs{k_i}\leq 8J^2\delta_i^{-1}$ for all $1\leq i\leq r$ and $\Norm{\vec{k}\cdot \vec{x}}\leq 4/\lfloor H/\ell\rfloor$ then 
\[\tfrac{1}{4}\abs{A}^h\leq \sum_{\vec{k}\in \Gamma}\Abs{\widehat{\psi}(\vec{k})}\Abs{\widehat{\ind{A}}(\vec{k})}^h.\]

Finally, for any $K\geq 1$ and $c>0$, the contribution to this sum from $\vec{k}$ with $\Norm{\vec{k}}_\infty\in [K/2,K)$ such that 
\[\Abs{\widehat{\ind{A}}(\vec{k})}^h\leq cK^{-r-1}\abs{A}^h\]
is (since there are at most $(2K)^r$ such $\vec{k}$) at most
\[\brac{cK^{-r-1}\abs{A}^h}(2K)^r=c2^rK^{-1}\abs{A}^h.\]
Choosing $c=2^{-r-5}$, say, and summing over dyadic values of $K$, we conclude that the contribution from those $\vec{k}$ with $\lvert \widehat{1_A}(\vec{k})\rvert^h \leq 2^{-2r-6}\|\vec{k}\|_\infty^{-r-1}\abs{A}^h$ is at most $\frac{1}{16}\abs{A}^h$. The second conclusion now follows, since the remaining sum is $\geq \frac{1}{16}\abs{A}^h$ and hence in particular must sum over a non-empty set of $\vec{k}$. 
\end{proof}

We may now prove Proposition~\ref{prop-main} (assuming further lemmas to be proved in subsequent sections). 
\begin{proof}[Proof of Proposition~\ref{prop-main}]
We fix some integers $g_1<\cdots<g_r$ with associated $\kappa_1,\ldots,\kappa_r$ and $\ell$ as in the statement of the proposition. All implied constants in this proof may depend on $r,g_1,\ldots,g_r,\kappa_1,\ldots,\kappa_r$.

Let $C>0$ be some constant to be chosen later, but depending only on $\ell,\kappa_1,\ldots,\kappa_r,g_1,\ldots,g_r$. Let $H\geq L\geq C$ (where $L$ is a power of $\ell$) and for $1\leq j\leq r$ let $R_j\geq 1$ be chosen such that
\[g_j^{R_j}\leq L<g_j^{R_j+1}\]
and, more generally, for $n\geq 1$ and $1\leq j\leq r$ let $\ell_j=\ell_j(n)=\lfloor n/\log_L g_j\rfloor$, so that 
\[g_j^{\ell_j}\leq L^n<g_j^{\ell_j+1}.\]
Consider $\vec{x}(n)\in [0,1)^r$ defined by 
\[x_j(n)=L^ng_j^{-\ell_j-R_j}= g_j^{\{n/\log_Lg_j\}-R_j}\in [g_j^{-R_j},g_j^{-R_j+1}).\]
Let $h=h(r)\geq 3$ be some parameter to be chosen later (which will depend only on $r$) and 
\[A_j=\left\{ \sum_{1\leq i\leq R_j}c_ig_j^{-i} : 0\leq c_i<\tfrac{\kappa_j}{h}g_j\textrm{ for }1\leq i\leq R_j\right\}\subset [0,1).\]
Let $\delta_j=g_j^{-R_j-1}$, so that distinct elements of $A_j$ are separated by $>\delta_j$, and $I_j=[0,\delta_j)$. 
It suffices to show that (if $N$ is sufficiently large) for all but at most $(\log L)^{O(1)}(L/H)^{1/r} N$ many $n\leq N$, for all $\vec{\beta}\in \bbr^r$ there exists some $1\leq s\leq H$ such that, for all $1\leq j\leq r$,
\[sx_j(n)+\beta_j\in hA_j+I_j+\bbz.\]
Indeed, if we apply this with $\beta_j=\beta g_j^{-\ell_j-R_j}$ then this means that the base $g_j$ digits of $(sL^n+\beta)g_j^{-\ell_j-R_j}$ corresponding to $g_j^{-u}\in [g_j^{-R_j},1)$ are all $<\kappa_jg_j$, and we obtain the conclusion after multiplying by $g_j^{\ell_j+R_j}$ (noting that $g^{-R_j}\gg 1/L$).

Let $A=A_1\times \cdots \times A_r$ and $I=[0,\delta_1)\times \cdots \times [0,\delta_r)$.
Note that $\abs{A}=\prod_j\lceil\tfrac{\kappa_j}{h}g_j\rceil^{R_j}$, so that with
\[\delta=\prod \delta_j=\prod_{1\leq j\leq r}g_j^{-R_j-1}\]
we have
\[\alpha = \delta \abs{A}\geq (ch)^{-O(R_1+\cdots+R_r)}\prod_{j} \kappa_j^{R_j}\]
for some constant $c\ll 1$. Note that $\log(1/\delta)\ll \log L$. Let $\Gamma\subset \bbz^r$ be the set of all $\vec{k}\in \bbz^r$ such that 

\begin{enumerate}
\item $\Abs{k_i}\leq C_1(\log \frac{1}{\delta})^2\delta_i^{-1}$ for all $1\leq i\leq r$ and
\item \[\Abs{\widehat{\ind{A}}(\vec{k})}\geq c_2 \Norm{\vec{k}}_\infty^{-\frac{r+1}{h}}\abs{A},\]
\end{enumerate}
where the constants $C_1,c_2$ (which will depend on $r$ only) are those in the second part of the conclusion of Proposition~\ref{prop-fourier}.

We say that $n\leq N$ is bad for $\vec{k}$ if $\Norm{\vec{k}\cdot \vec{x}(n)}\leq C_3\log(1/\alpha)/H$ (where again $C_3$ is an appropriate constant as delivered by the conclusion of Proposition~\ref{prop-fourier}). Invoking Proposition~\ref{prop-fourier} it suffices to prove that the number of $n\leq N$ which are bad for some $\vec{k}\in \Gamma$ is at most $(\log L)^{O(1)}(L/H)^{1/r} N$.

Let $\Gamma_M\subseteq \Gamma$ be the set of $\vec{k}\in \Gamma$ with $\Norm{\vec{k}}_\infty\in [M/2,M)$. We will bound, for each fixed $\vec{k}\in \Gamma_M$, the number of $n$ which are bad for $\vec{k}$ -- that is, we need to count the number of $n\leq N$ such that
\[\left\| \sum_j k_j g_j^{-R_j} g_j^{\{\frac{n}{\log_L g_j}\}}\right\|\leq \epsilon,\]
where $\epsilon=C_3\log(1/\alpha)/H$. The idea is that the exponents $\{\frac{n}{\log_Lg_j}\}$ should behave generically like independent random elements from $[0,1]$ and each coefficient has size $\asymp M/L$, and so this inequality should be true with `probability' $\ll (\epsilon L/M)^{1/r}$.

Making this intuition precise is the content of Section~\ref{sec-sep} (which is significantly complicated by the fact that, without Schanuel's conjecture, we do not know that the $\frac{1}{\log_Lg_j}$ are independent over $\mathbb{Q}$). In particular, applying Lemma~\ref{lem:badbound} of Section~\ref{sec-sep} (with $M/L\ll \max_i\abs{\zeta_i}\ll (\log L)^2$) implies that, for sufficiently large $N$, for each $\vec{k}\in \Gamma_M$, the number of $1\leq n\leq N$ which are bad for $\vec{k}$ is 
\[\ll (\log L)^2\brac{\log(1/\alpha)L/HM}^{1/r}N.\]

We will now bound the size of $\Gamma_M$ itself. We know that if $\vec{k}\in \Gamma_M$ then $\| \vec{k}\|_\infty <M$ and  
\[\prod_{i=1}^r \abs{\sum_{a\in A_j}e(k_ja)}\gg M^{-\frac{r+1}{h}}\prod_{j=1}^r\abs{A_j},\]
and hence in particular for all $1\leq j\leq r$
\[\abs{\sum_{a\in A_j}e(k_ja)}\gg M^{-2r/h} \abs{A_j}.\]
We will use this lower bound on the size of the exponential sum over $A_j$ to restrict the number of possible choices for each $k_j$. Giving an upper bound on the number of such $k_j$ is the goal of Section~\ref{sec-spec}. In particular, Lemma~\ref{lem:spec} of that section implies that, since $M\ll (\log L)^2L$ and $L\asymp g_j^{R_j}$, the number of such $0\leq k_j< M$ is 
\[\ll (\log L)^{O(1)} M^{\log_{g_j}(10g_j/t_j)+\sqrt{8r/h}},\]
say, where $t_j=\lceil \frac{\kappa_j}{h}g_j\rceil$. It follows that 
\[\abs{\Gamma_M}\ll (\log L)^{O(1)} M^{\sum_j\log_{g_j}(10g_j/t_j)+\sqrt{8r^3/h}}.\]
Combining our two estimates we deduce that the number of $n$ which are bad for some $\vec{k}\in \Gamma_M$ is 
\[\ll (\log L)^{O(1)}(L/H)^{1/r}\log(1/\alpha)M^{\sum_j\log_{g_j}(10g_j/t_j)+\sqrt{8r^3/h}-1/r}N.\]
We now choose $h=32r^5$. Since $\log(1/\alpha)\ll R_1+\cdots+R_r\ll \log L$, this is 
\[\ll (\log L)^{O(1)}(L/H)^{1/r}M^{\sum_j \log_{g_j}(10g_j/t_j)-1/2r}N,\]
and by assumption 
\[\sum_j \log_{g_j}(10g_j/t_j)-1/2r< 0.\]
Summing over $M=2^k$ for all $k\geq 0$ implies that the number of $n$ which are bad for any $\vec{k}\in \Gamma$ is 
\[\ll (\log L)^{O(1)}(L/H)^{1/r}N\]
as required.
\end{proof}
\section{Separation}\label{sec-sep}
In this section we study the following problem: given some $\zeta_j,\theta_j$ can we bound the number of $n\leq N$ such that
\[\| \zeta_1\theta_1^{\{\frac{n}{\log_Lg_1}\}}+\cdots \zeta_r\theta_r^{\{\frac{n}{\log_Lg_r}\}}\| \leq \epsilon?\]
Suppose that we knew $1,\frac{1}{\log_Lg_1},\ldots,\frac{1}{\log_Lg_r}$ to be linearly independent over $\mathbb{Q}$. An application of the classical Weyl's theorem then implies that the vectors
\[\left(\left\{\frac{n}{\log_L g_1}\right\},\ldots,\left\{\frac{n}{\log_L g_r}\right\}\right)\]
are uniformly distributed in $[0,1)^r$ as $n\to \infty$. In particular it suffices to bound, for some fixed parameter $\delta$, the number of $\delta$-separated $\vec{t}\in [0,1)^r$ such that 
\[\| \zeta_1\theta_1^{t_1}+\cdots+\zeta_r\theta_r^{t_r}\| \leq \epsilon.\]
By counting along 1-dimensional `slices' this in turn can be reduced to the problem of bounding the number of $t\in [0,1)$ such that
\[\| \zeta_1\theta_1^{t}+\cdots+\zeta_r\theta_r^{t}\| \leq \epsilon.\]
(Note the coefficients $\zeta_i$ may be differ as we vary over the 1-dimensional slices, and hence some reasonable uniformity in the dependence on $\zeta_i$ is required.)
Finally, we may give an upper bound on the number of such $t$ by showing that in any large $\delta$-separated sequence of $t$ there must be some value where $\sum \zeta_j \theta_j^t$ is large, which in turn is proved by an elementary argument using induction on $r$ and the mean value theorem.

There is, however, a complication in that it is not know how to prove that the $\frac{1}{\log_L g_j}$ are linearly independent over $\bbq$ (although this is an immediate consequence of Schanuel's conjecture). Surprisingly, however, this does not cause significant difficulties: we can simply take any possible linear dependencies into account, and apply the above argument to a maximal linearly independent subset of the $\frac{1}{\log_L g_j}$, and apply Weyl's theorem in a suitable lower-dimensional space. 

We will now put this plan into action. The first step is the following auxiliary lemma.
\begin{lemma}\label{lem-lb}
Let $x_1,\ldots,x_r>0$ be distinct real numbers with associated $c_1,\ldots,c_r\in \bbr$. If $f(t)=\sum_{1\leq i \leq r}c_ix_i^{t}$ then for any $0<v_1<\cdots<v_{2^{r-1}}<1$
\[\max_{1\leq j\leq 2^{r-1}}\abs{f(v_j)}\gg_{x_1,\ldots,x_r}\Delta^{r-1}\max_j\abs{c_j},\]
where $\Delta=\min\abs{v_{k+1}-v_k}$.
\end{lemma}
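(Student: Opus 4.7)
The plan is to induct on $r$. The base case $r=1$ is immediate: for $v_1 \in (0,1)$ and $x_1 > 0$ we have $|f(v_1)| = |c_1| x_1^{v_1} \gg_{x_1} |c_1|$, which matches the claim since $2^{r-1}=1$ and $\Delta^{r-1}=1$.

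For the inductive step, assume the bound is known for $r-1$. First I would normalise by the last base: set
\[g(t) = x_r^{-t} f(t) = c_r + \sum_{i<r} c_i (x_i/x_r)^t.\]
Since each $v_j \in (0,1)$, $|g(v_j)|$ and $|f(v_j)|$ differ only by a multiplicative factor bounded above and below in terms of $x_r$, so it suffices to lower-bound $\max_j |g(v_j)|$.

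The key move is to apply the mean value theorem to $g$ in order to pass to $g'$, which is a sum of exponentials with one fewer term. I would pair the $2^{r-1}$ points into $2^{r-2}$ consecutive pairs $(v_{2k-1},v_{2k})$ and obtain intermediate points $\xi_k \in (v_{2k-1},v_{2k})$ with
\[g'(\xi_k) = \frac{g(v_{2k}) - g(v_{2k-1})}{v_{2k} - v_{2k-1}}.\]
Consecutive intermediate points are separated by at least $v_{2k+1}-v_{2k} \geq \Delta$, so the $\xi_k$ form a $\Delta$-separated family of $2^{r-2}$ points in $(0,1)$. Writing
\[g'(t) = \sum_{i=1}^{r-1} c_i \log(x_i/x_r) \, (x_i/x_r)^t,\]
the bases $x_i/x_r$ (for $i<r$) are distinct positive reals, so the inductive hypothesis applies and gives
\[\max_k |g'(\xi_k)| \gg_{x_1,\ldots,x_r} \Delta^{r-2} \max_{i<r} |c_i \log(x_i/x_r)| \gg_{x_1,\ldots,x_r} \Delta^{r-2} \max_{i<r} |c_i|,\]
where the last step absorbs the positive constant $\min_{i<r} |\log(x_i/x_r)|$ into the implied constant (legal because the $x_i$ are distinct).

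Combining this with the trivial bound $|g'(\xi_k)| \leq 2\Delta^{-1} \max_j |g(v_j)|$ yields $\max_j |g(v_j)| \gg \Delta^{r-1} \max_{i<r} |c_i|$. To fold in $c_r$, I would use the identity $c_r = g(v_j) - \sum_{i<r} c_i (x_i/x_r)^{v_j}$ for any fixed $j$ to obtain
\[|c_r| \ll_{x_1,\ldots,x_r} \max_j |g(v_j)| + \max_{i<r} |c_i|,\]
and then invoke the previous display together with $\Delta \leq 1$ to get $|c_r| \ll \Delta^{-(r-1)} \max_j |g(v_j)|$. Hence $\max_j |c_j| \ll \Delta^{-(r-1)} \max_j |g(v_j)| \asymp_{x_r} \Delta^{-(r-1)} \max_j |f(v_j)|$, which rearranges to the desired bound. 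The only real obstacle is bookkeeping: making sure the implied constants depend only on $x_1,\ldots,x_r$, and that the pair-and-differentiate step halves the number of points while losing only one factor of $\Delta$, which is precisely what the doubling $2^{r-1} \to 2^{r-2}$ is designed to accommodate.
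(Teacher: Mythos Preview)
Your proof is correct and follows essentially the same route as the paper: induct on $r$, normalise by one base to make the constant term drop out upon differentiation, pair up the $2^{r-1}$ points and apply the mean value theorem to pass to $g'$ on $2^{r-2}$ points that are still $\Delta$-separated, then invoke the inductive hypothesis. The one cosmetic difference is that the paper normalises by the base $x_i$ with \emph{smallest} $|c_i|$, so that the surviving coefficients after differentiation already contain the maximum and no separate recovery of the dropped coefficient is needed; your choice of an arbitrary base works too, at the cost of the extra (but easy) step bounding $|c_r|$ at the end.
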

\begin{proof}
We use induction on $r$. The case $r=1$ is trivial. In general, suppose that $r\geq 2$ and the statement holds for $r-1$. Without loss of generality we can assume that $\abs{c_1}\leq \abs{c_i}$ for all $2\leq i\leq r$. Consider
\[g(t) = x_1^{-t}f(t)=c_1+\sum_{2\leq k\leq r}c_k(x_k/x_1)^t.\]
Note that since $\abs{f(t)}\geq \min(1,x_1)\abs{g(t)}$ for all $t\in[0,1]$ it suffices to prove 
\[\max_{0\leq j\leq 2^{r-1}}\abs{g(v_j)}\gg_{x_1,\ldots,x_r}
\Delta^{r-1}\max_{2\leq j\leq r}\abs{c_j}.\]

By the mean value theorem there is some $w_j\in [v_{2j-1},v_{2j}]$ for $1\leq j\leq 2^{r-2}$ such that 
\[\max(\abs{g(v_{2j})},\abs{g(v_{2j-1})})\geq \abs{g(v_{2j})-g(v_{2j-1})}\geq \Delta \Abs{g'(w_j)}.\]
Note that $\min_{1\leq j\leq 2^{r-2}}\Abs{w_{j+1}-w_j}\geq \Delta$ so we are done by the inductive hypothesis applied to 
\[g'(t)=\sum_{2\leq j\leq r}(\log x_j/x_1)c_j(x_j/x_1)^t.\]
\end{proof}
We may now solve our analytic problem in the simplest one-dimensional case.
\begin{lemma}\label{lem:curve1}
Let $\epsilon>0$ and $\zeta_1,\ldots,\zeta_r\in \bbr$. Let $\theta_1,\ldots,\theta_r>0$ be distinct real numbers (all $\neq 1$) and $\delta>0$. Suppose that $\abs{\zeta_1\theta_1^t+\cdots+\zeta_r\theta_r^t}\leq Z$ for all $t\in [0,1)$.

The number of $\delta$-separated $t\in[0,1)$ such that
\[\norm{\zeta_1\theta_1^t+\cdots+\zeta_r\theta_r^t}\leq \epsilon\]
is 
\[O_{\theta_1,\ldots,\theta_r,r}(Z\delta^{-1}\epsilon^{1/r}(\max_i \abs{\zeta_i})^{-1/r}).\]
\end{lemma}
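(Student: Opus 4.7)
The plan is to partition the $t$-values under consideration according to the nearest integer to $f(t)$, bound the size of each class by Lemma~\ref{lem-lb} applied to a function with one extra exponential, and then sum over the classes.

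Since $\abs{f(t)}\le Z$, if $\norm{f(t)}\le\epsilon$ then $f(t)$ lies in $[n-\epsilon,n+\epsilon]$ for some integer $n$ with $\abs{n}\le Z+1$. For each such $n$, let $S_n$ denote the set of $\delta$-separated $t\in[0,1)$ with $\abs{f(t)-n}\le\epsilon$. It suffices to bound $\abs{S_n}$ uniformly in $n$ and then multiply by the number $O(Z+1)$ of admissible $n$. This reduction is the point of the proof: estimates on $\abs{f(t)}$ (like Lemma~\ref{lem-lb}) do not directly see distance to integers, so we fix an integer $n$ and work with $F(t) := f(t)-n$ instead.

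The key observation is that
\[F(t) = (-n)\cdot 1^t + \sum_{i=1}^r \zeta_i \theta_i^t\]
is a linear combination of $r+1$ exponentials with $r+1$ distinct positive bases $1,\theta_1,\ldots,\theta_r$ -- distinctness uses the hypothesis $\theta_i\ne 1$. Hence Lemma~\ref{lem-lb} is available with $r$ replaced by $r+1$. Write $\abs{S_n}=M$ and enumerate its elements as $u_1<u_2<\cdots<u_M$. Set $j=\lfloor M/2^r\rfloor$ and $v_k=u_{jk}$ for $k=1,\ldots,2^r$; then these $2^r$ points have minimum gap $\Delta\ge j\delta\gs M\delta/2^r$. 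Lemma~\ref{lem-lb} gives
\[\max_{1\le k\le 2^r}\abs{F(v_k)} \gs_{\theta,r} \Delta^r \max_{1\le i\le r}\abs{\zeta_i},\]
where the maximum of the coefficients has been lower-bounded by $\max_i\abs{\zeta_i}$ (discarding $\abs{n}$). But $\abs{F(v_k)}\le\epsilon$ for each $v_k\in S_n$, so combining these inequalities and solving for $M$ yields
\[\abs{S_n} = M \ls_{\theta,r} \delta^{-1}(\epsilon/\max_i\abs{\zeta_i})^{1/r},\]
provided $M\ge 2^{r+1}$; if $M<2^{r+1}$ then $\abs{S_n}=O_r(1)$ is absorbed into the implied constant. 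Summing over the $\ll Z+1$ admissible values of $n$ gives the required bound.

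The main conceptual step is the shift-by-$n$ trick that replaces the condition $\norm{f(t)}\le\epsilon$ by the ordinary magnitude condition $\abs{F(t)}\le\epsilon$, at the cost of a single extra exponential $1^t$; the hypothesis $\theta_i\ne 1$ is exactly what makes this harmless. The only minor technicality is that Lemma~\ref{lem-lb} assumes $0<v_1<\cdots<v_{2^r}<1$ strictly, which can be arranged by trimming at most one point from each end of the chosen subsequence of $S_n$ (affecting only the implied constant).
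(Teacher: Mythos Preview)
Your proof is correct and takes a somewhat different route from the paper's. Both arguments begin the same way: fix an integer $n$ with $\abs{n}\le Z+\epsilon$, bound $\abs{S_n}$, and sum over the $O(Z)$ admissible $n$. The difference is in how $\abs{S_n}$ is controlled.

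The paper passes to the derivative $f'(t)=\sum_j(\log\theta_j)\zeta_j\theta_j^t$, which is a genuine $r$-term exponential sum (the constant $-n$ has vanished). It covers $S_n$ by $2^r$ intervals $I_1,\ldots,I_{2^r}$ each containing $\asymp\abs{S_n}/2^r$ points, picks in each interval a minimiser $v_i$ of $\abs{f'}$, applies Lemma~\ref{lem-lb} (with $r$ terms, hence $2^{r-1}$ sample points $v_1,v_3,\ldots$) to locate one interval $I_{2i-1}$ on which $\abs{f'}\gg(\delta\abs{S_n})^{r-1}\max_j\abs{\zeta_j}$ throughout, and then uses a mean-value/monotonicity fact (if $\abs{f'}\ge\lambda$ on $I$ then $I$ contains $\ll\epsilon/(\lambda\delta)$ many $\delta$-separated points with $\abs{F}\le\epsilon$) to finish.

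You instead keep $F(t)=f(t)-n$ intact, view the constant as an extra exponential $(-n)\cdot 1^t$, and apply Lemma~\ref{lem-lb} with $r+1$ terms directly at $2^r$ well-spaced points of $S_n$ itself. Since $\abs{F}\le\epsilon$ at each of those points, the lemma immediately forces $(\delta\abs{S_n})^r\max_j\abs{\zeta_j}\ll\epsilon$, with no derivative or monotonicity step needed. The hypothesis $\theta_i\ne1$ does the same work in both proofs (for you it guarantees distinctness of the $r+1$ bases; for the paper it guarantees $\log\theta_i\ne0$ so that the derivative coefficients are comparable to the $\zeta_i$). Your argument is a little shorter; the paper's makes the mechanism behind the exponent $1/r$ slightly more explicit, but the two yield the same bound.
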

\begin{proof}
It suffices to show that, for any integer $n$, the number of $\delta$-separated $t\in [0,1)$ such that 

\[\abs{\zeta_1\theta_1^t+\cdots+\zeta_r\theta_r^t-n}\leq \epsilon\]
is $O(\delta^{-1}\epsilon^{1/r}(\max \abs{\zeta_i})^{-1/r})$.

Let the set of such $t\in [0,1)$ be denoted by $T$. We can assume that $\abs{T}\geq 2^{r+2}$, say, or else we are done. It follows that there are $2^r$ many disjoint closed intervals $I_1,\ldots,I_{2^r}\subseteq [0,1)$ such that $T\subseteq \cup_i I_i$ and $\abs{T\cap I_i}\geq \lfloor\abs{T}/2^r\rfloor$ for $1\leq i\leq 2^r$. Let $f(t)=\sum_{j=1}^r \zeta_j \theta_j^t-n$ and for $1\leq i\leq 2^r$ let $v_i$ be a point in $I_i$ where $f'(t)$ is minimised. Note that, since all points in $T$ are $\delta$-separated, we have, for $1\leq i\leq 2^{r-1}$,
\[\abs{v_{2i+1}-v_{2i-1}}\geq \delta(\abs{T\cap I_{2i}}-1)\geq \frac{\delta}{2^{r+2}}\abs{T}.\]
By Lemma~\ref{lem-lb}, therefore, there exists some $1\leq i\leq 2^{r-1}$ such that
\[\min_{x\in I_{2i-1}}\abs{f'(x)}\geq \abs{f'(v_{2i-1})}\gg_{\theta_1,\ldots,\theta_r,r}\abs{T}^{r-1}\delta^{r-1}\max_i \abs{\zeta_i}.\]
We now use the general fact (which follows e.g. from the mean value theorem) that if $f:I\to \bbr$ has $\abs{f'(x)}\geq \lambda>0$ for all $x\in I$ and $t_1,\ldots,t_k\in I$ are $\delta$-separated points such that $\abs{f(t_i)}\leq \epsilon$ for $1\leq i\leq k$ then $\lambda \delta k\leq \epsilon$ to deduce that  
\[\abs{T}^r\delta^r\max_i \abs{\zeta_i}\ll \epsilon,\]
and the bound follows after rearranging this. 
\end{proof}

We will now deduce a more general form of this lemma, where the exponents are given by independent linear forms in some $t_1,\ldots,t_d\in [0,1)$. We need to be able to handle arbitrary linear forms in the exponents to handle the presence of possible linear dependencies between our $\frac{1}{\log_Lg_j}$. 

\begin{lemma}\label{lem:curve}
Let $1\leq d\leq r$. Let $\zeta_1,\ldots,\zeta_r\in \bbr$ and $\theta_1,\ldots,\theta_r> 1$ be such that $\theta_i^{a_i}\neq \theta_j^{a_j}$ for $i\neq j$ and any $a_i,a_j\in \bbq\backslash\{0\}$. Let $Q_{d+1},\ldots,Q_r$ be non-zero rational linear forms in $t_1,\ldots,t_d$. Suppose that
\[\abs{\sum_{1\leq i\leq d}\zeta_i\theta_i^{t_i}+\sum_{d< j\leq r}\zeta_j\theta_j^{Q_j(\vec{t})}}\leq Z\]
for all $t_1,\ldots,t_d\in [0,1)$. 

Let $\epsilon,\delta>0$. The number of $\delta$-separated points $\vec{t}\in [0,1)^d$ such that 
\[\norm{\sum_{1\leq i\leq d}\zeta_i\theta_i^{t_i}+\sum_{d< j\leq r}\zeta_j\theta_j^{Q_j(\vec{t})}}\leq \epsilon\]
is 
\[O_{r,\theta_1,\ldots,\theta_r,Q_{d+1},\ldots,Q_r}(Z\delta^{-d}(\max_i\abs{\zeta_i})^{-1/r}(\epsilon +\delta\max_i \abs{\zeta_i})^{1/r}).\]

\end{lemma}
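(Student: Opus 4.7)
The plan is to reduce the $d$-dimensional problem to the one-dimensional Lemma~\ref{lem:curve1} by slicing along a carefully chosen coordinate direction. Write $\zeta^*=\max_i|\zeta_i|$ and let $i^*$ achieve this maximum. I first pick a slicing direction $k^*\in\{1,\ldots,d\}$ so that, when all other variables are fixed, the resulting 1-d problem retains an exponential term whose coefficient has magnitude $\asymp \zeta^*$. If $i^*\leq d$, take $k^*=i^*$, so the term $\zeta_{i^*}\theta_{i^*}^{t_{i^*}}$ itself survives in the slice. If $i^*>d$, since $Q_{i^*}$ is a nonzero rational linear form, there is some $k$ with $\alpha_{i^*,k}\neq 0$, and I take $k^*=k$; the coefficient of $(\theta_{i^*}^{\alpha_{i^*,k^*}})^{t_{k^*}}$ in the slice is then $\zeta_{i^*}\theta_{i^*}^{Q_{i^*}(\vec{t})-\alpha_{i^*,k^*}t_{k^*}}$, whose magnitude is $\asymp\zeta^*$ with constants depending only on the $\theta_i$ and $Q_j$. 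After relabeling, assume $k^*=1$.

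I now slice. Partition $[0,1)^{d-1}$ into $\ll\delta^{-(d-1)}$ axis-parallel boxes of side $\delta$ and pick a representative $(t_2^*,\ldots,t_d^*)$ in each. Each partial derivative $\partial_{t_k}f$ is again a sum of exponentials of the same shape with coefficients $\ll\zeta^*$, so the mean value theorem applied in the $(t_2,\ldots,t_d)$-directions yields
\[
|f(\vec{t})-f(t_1,t_2^*,\ldots,t_d^*)|\ll\delta\zeta^*
\]
for $(t_2,\ldots,t_d)$ in the box. Consequently any $\delta$-separated $\vec{t}$ in the box with $\|f(\vec{t})\|\leq\epsilon$ yields a $\delta$-separated $t_1\in[0,1)$ with $\|g(t_1)\|\leq\epsilon+O(\delta\zeta^*)$, where $g(t_1)=f(t_1,t_2^*,\ldots,t_d^*)$. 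The function $g$ fits the hypothesis of Lemma~\ref{lem:curve1}: its active bases are $\theta_1$ together with those $\theta_j^{\alpha_{j,1}}$ with $j>d$ and $\alpha_{j,1}\neq 0$, which are pairwise distinct and $\neq 1$ by the multiplicative independence hypothesis on the $\theta_i$; the inactive terms contribute only a real constant, which the proof of Lemma~\ref{lem:curve1} absorbs (its mean-value argument is insensitive to the exact real value of the target shift, and the number of relevant integer translates is still $\ll Z$ since $|g|\leq Z$).

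Applying Lemma~\ref{lem:curve1} inside each box bounds the $t_1$-count by $\ll Z\delta^{-1}\bigl((\epsilon+O(\delta\zeta^*))/\zeta^*\bigr)^{1/r_1}$, where $r_1\leq r$ is the number of active bases in the slice. When the ratio in the bracket is $\leq 1$, the inequality $1/r_1\geq 1/r$ lets me weaken the exponent to $1/r$; when it exceeds $1$ I instead use the trivial count $\delta^{-1}$, which is dominated by $Z\delta^{-1}((\epsilon+\delta\zeta^*)/\zeta^*)^{1/r}$ (after absorbing $O(1)$ constants and, for $Z<1$, a routine adjustment). Summing over the $\ll\delta^{-(d-1)}$ boxes yields the claimed bound. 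The main obstacle is the choice of slicing direction in the first step: without selecting $k^*$ so that the dominant coefficient $\zeta_{i^*}$ survives in the slice, the denominator in the Lemma~\ref{lem:curve1} bound could degrade to something much smaller than $\zeta^*$, ruining the final estimate. The extra $+\delta\zeta^*$ correction in the statement is precisely the Lipschitz cost of transferring the norm inequality from a single representative to the entire box.
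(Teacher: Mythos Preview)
Your approach is correct and reaches the stated bound, but it differs from the paper's proof in an interesting way. The paper does not slice along a coordinate axis; instead it parametrises $\{0,\ldots,D-1\}^d$ by \emph{skew} lines $t_i=M^{i-1}t_1+u_i$ for a suitably chosen integer $M=O(1)$. Along each such line all $r$ exponential terms stay active, with bases $\theta_1,\theta_2^{M},\ldots,\theta_d^{M^{d-1}},\theta_{d+1}^{Q_{d+1}(1,M,\ldots,M^{d-1})},\ldots$, which are distinct and $\neq 1$ for all but finitely many $M$ thanks to the multiplicative independence hypothesis. Lemma~\ref{lem:curve1} then applies with the full set of $r$ terms, so the exponent $1/r$ comes out directly and the dominant coefficient $\zeta^*$ is automatically among the active ones.

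Your route trades this clever parametrisation for two extra steps: choosing the slicing direction so that the term carrying $\zeta^*$ survives, and then weakening the exponent from $1/r_1$ to $1/r$ using monotonicity when $(\epsilon+\delta\zeta^*)/\zeta^*\leq 1$. You also need the (harmless) observation that the inactive terms collapse to a real constant which the proof of Lemma~\ref{lem:curve1} absorbs, since that proof only uses $f'$ and the number of relevant integer shifts, both unaffected by an additive constant. The paper's skew-line trick avoids all three of these manoeuvres at once. On the other hand your argument is more elementary and makes transparent why the $+\delta\zeta^*$ correction appears: it is exactly the Lipschitz cost of replacing each $\delta$-box by a representative, which in the paper is front-loaded into the ``it suffices to show'' reduction at the very start.
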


\begin{proof}
Without loss of generality, we can assume that $\delta=1/D$ for some integer $D\geq 1$ and (since changing a $t_i$ by $\delta$ can affect the size of the sum by at most $O_{r,\theta_1,\ldots,\theta_r,Q_{d+1},\ldots,Q_r}(\delta \max_i\abs{\zeta_i})$) it suffices to show that, for any $\epsilon>0$, the number of $\vec{t}\in \{0,\ldots,D-1\}^d$ such that 
\begin{equation}\label{eq:curve1}
\norm{\sum_{1\leq i\leq d}\zeta_i(\theta_i^{1/D})^{t_i}+\sum_{d< j\leq r}\zeta_j(\theta_j^{1/D})^{Q_j(\vec{t})}}\leq \epsilon
\end{equation}
is
\[O_{r,\theta_1,\ldots,\theta_r,Q_{d+1},\ldots,Q_r}(ZD^d\epsilon^{1/r}(\max_i \abs{\zeta_i})^{-1/r}).\]

Let $2\leq M\ll 1$ be some integer to be chosen later. Any $\vec{t}\in \{0,\ldots,D-1\}^d$ can be written as 
\[(t_1,Mt_1+u_2,\ldots,M^{d-1}t_1+u_{d})\]
for some $u_2,\ldots,u_d\in \bbz\cap [-M^{d-1}D,D)$.

It follows that exists some $U\subset \bbz^{d}$ such that
\begin{enumerate}
\item $\abs{U}\ll D^{d-1}$,
\item $\Norm{\vec{u}}_\infty\ll D$ for all $\vec{u}\in U$, and 
\item if $\vec{t}\in \{0,\ldots,D-1\}^d$ then there exists some $\vec{u}\in U$ such that
\[\vec{t}=(t_1+u_1,Mt_1+u_2,\ldots,M^{d-1}t_1+u_{d}).\]
\end{enumerate}
It therefore suffices to show that, for any fixed $\vec{u}\in U$, the number of $t\in \{0,\ldots,D-1\}$ such that 
\[\norm{\sum_{1\leq i\leq d}\zeta_i\theta_i^{u_i/D}(\theta_i^{M^{i-1}})^{t/D}+\sum_{d< j\leq r}\zeta_j\theta_j^{Q_j(u_1,\ldots,u_d)/D}(\theta_j^{Q_j(1,M,\ldots,M^{d-1})})^{t/D}}\leq \epsilon\]
is
\[O_{r,\theta_1,\ldots,\theta_r,Q_{d+1},\ldots,Q_r}(ZD\epsilon^{1/r}(\max_i \abs{\zeta_i})^{-1/r}).\]
This follows by an application of Lemma~\ref{lem:curve1}, after choosing $M\ll 1$ such that all the numbers $\theta_1,\ldots,\theta_d^{M^{d-1}},\ldots,\theta_r^{Q_r(1,\ldots,M^{d-1})}$ are all distinct and $\neq 1$, and noting that 

\[\max(\Abs{\zeta_1}\theta_1^{u_1/D},\ldots,\Abs{\zeta_r}\theta_r^{Q_r(u_1,\ldots,u_d)/D})\gg_{r,\theta_1,\ldots,\theta_r,Q_{d+1},\ldots,Q_r} \max_i\abs{\zeta_i}.\]
\end{proof}

We can now prove the goal of this section, for which we will require the following form of Weyl's theorem.
\begin{lemma}[Multidimensional Weyl's theorem]\label{lem:weyl}
If $1,\theta_1,\ldots,\theta_d\in \bbr$ are linearly independent over $\bbq$ then the sequence
\[\brac{\{n\theta_1\},\ldots,\{n\theta_d\}}\]
is uniformly distributed in $[0,1)^d$ as $n\to\infty$.
\end{lemma}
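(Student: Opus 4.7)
The plan is to reduce the statement to the classical one-dimensional Weyl criterion by passing through its multidimensional analogue. Recall that Weyl's criterion (a standard consequence of the Stone--Weierstrass theorem, which approximates indicators of boxes by trigonometric polynomials) asserts that a sequence $(\vec{x}_n)_{n\geq 1}$ in $[0,1)^d$ is uniformly distributed if and only if for every $\vec{k}\in \bbz^d\setminus\{0\}$,
\[\frac{1}{N}\sum_{n=1}^N e(\vec{k}\cdot \vec{x}_n)\to 0 \quad\textrm{as}\quad N\to \infty.\]
Invoking this, it suffices to prove that the displayed limit holds when $\vec{x}_n=(\{n\theta_1\},\ldots,\{n\theta_d\})$.

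Fix such a $\vec{k}$ and set $\alpha = k_1\theta_1+\cdots+k_d\theta_d$. Since $e(k_i\{n\theta_i\})=e(k_in\theta_i)$ for each $i$, the exponential sum collapses to the geometric progression $\sum_{n=1}^N e(n\alpha)$. The hypothesis that $1,\theta_1,\ldots,\theta_d$ are linearly independent over $\bbq$, together with $\vec{k}\neq 0$, forces $\alpha \notin \bbq$: indeed $\alpha \in \bbq$ would give a non-trivial rational linear relation between $1,\theta_1,\ldots,\theta_d$. In particular $e(\alpha)\neq 1$, so the sum telescopes to
\[\abs{\sum_{n=1}^N e(n\alpha)}=\abs{\frac{e((N+1)\alpha)-e(\alpha)}{e(\alpha)-1}}\leq \frac{2}{\abs{e(\alpha)-1}},\]
a bound independent of $N$. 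Dividing by $N$ and sending $N\to\infty$ verifies Weyl's criterion, and hence the lemma.

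There is no genuine obstacle here; the result is standard, and essentially all content is packaged into Weyl's criterion itself. The only two points that merit attention are (i) citing (or briefly deriving) Weyl's criterion from Stone--Weierstrass, and (ii) observing that the combination of $\vec{k}\neq 0$ with the inclusion of the constant $1$ among the independent elements is precisely what rules out $\alpha\in\bbq$. Both are routine.
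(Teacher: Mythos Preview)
Your proof is correct and entirely standard; the paper itself does not prove this lemma at all, simply stating it as the classical multidimensional Weyl theorem and using it as a black box. Your argument via Weyl's criterion and the geometric sum bound is exactly the textbook proof, so there is nothing to compare.
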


\begin{lemma}\label{lem:badbound}
Let $g_1,\ldots,g_r\geq 2$ be such that there are no solutions to $g_{i}^{a_i}\neq g_{j}^{b_j}$ for all $i\neq j$ and integers $a_i,b_j\geq 1$. Let $\ell\geq 2$ satisfy $(\ell,g_1\cdots g_r)=1$ and let $L\geq \ell$ be a power of $\ell$. Let $\zeta_1,\ldots,\zeta_r\in \bbr$.

If $N$ is sufficiently large the number of $n\in [N]$ such that
\[\norm{\zeta_1g_1^{\{n/\log_Lg_1\}}+\cdots+\zeta_rg_r^{\{n/\log_Lg_r\}}}\leq \epsilon\]
 is 
\[\ll_{g_1,\ldots,g_r} (1+\max\abs{\zeta_i})(\max \abs{\zeta_i})^{-1/r}\epsilon^{1/r}N\]
\end{lemma}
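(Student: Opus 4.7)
The plan is to reduce, via Weyl equidistribution, to a volume bound on a ``bad'' set in $[0,1)^d$, and then apply Lemma~\ref{lem:curve}. Write $\theta_j = 1/\log_L g_j$. Each $\theta_j$ is irrational: since $L = \ell^h$ with $(\ell, g_j) = 1$ and $\ell, g_j \geq 2$, an equation $\ell^h = g_j^{p/q}$ is impossible. Extract a maximal $\bbq$-linearly independent subset $\{1, \theta_1, \ldots, \theta_d\}$ of $\{1, \theta_1, \ldots, \theta_r\}$ (after reordering), so for $d < j \leq r$ we can write $\theta_j = q_j + \sum_{i \leq d} c_{ji}\theta_i$ with $q_j, c_{ji} \in \bbq$, and the linear form $\sum c_{ji} X_i$ is non-zero (otherwise $\theta_j \in \bbq$). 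Fix a common denominator $M$, set $e_{ji} = Mc_{ji} \in \bbz$, and split $n \in [N]$ by $n_0 := n \bmod M$, writing $n = Mm + n_0$. Setting $v_i = \{m\theta_i\}$, a direct calculation using that $e_{ji}$ and $Mq_j$ are integers gives
\[\{n\theta_i\} = \{Mv_i + n_0\theta_i\}\ (i \leq d), \qquad \{n\theta_j\} = \{\textstyle\sum_i e_{ji}v_i + n_0\theta_j\}\ (j > d),\]
so that $\zeta_1 g_1^{\{n\theta_1\}} + \cdots + \zeta_r g_r^{\{n\theta_r\}}$ equals a function $S_{n_0}(\vec v)$ of $\vec v \in [0,1)^d$.

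By Lemma~\ref{lem:weyl} the vector $\vec v = (v_1,\ldots,v_d)$ equidistributes in $[0,1)^d$, so for $N$ sufficiently large the count of bad $n$ in a given residue class $n_0$ is at most $(1+o(1))(N/M)\mathrm{vol}(B_{n_0})$, where $B_{n_0} = \{\vec v : \|S_{n_0}(\vec v)\| \leq \epsilon\}$; the set $B_{n_0}$ is closed, so we apply Weyl to a slight open enlargement of vanishing excess volume. Summing over the boundedly many residue classes, it suffices to establish
\[\mathrm{vol}(B_{n_0}) \ll (1+\max|\zeta_i|)(\max|\zeta_i|)^{-1/r}\epsilon^{1/r}.\]

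To estimate this volume, partition $[0,1)^d$ into the $O(1)$ regions on which every floor $\lfloor Mv_i + n_0\theta_i\rfloor$ and $\lfloor \sum_i e_{ji}v_i + n_0\theta_j\rfloor$ is constant. On each region $S_{n_0}(\vec v)$ agrees with an analytic exponential sum
\[\widetilde S(\vec v) = \sum_{i \leq d}\zeta'_i (g_i^M)^{v_i} + \sum_{j > d}\zeta'_j g_j^{\sum_i e_{ji}v_i},\]
where $\zeta'_i = \zeta_i \cdot g_i^{n_0\theta_i - k_i}$ for integer shifts $k_i$, so $|\zeta'_i| \asymp |\zeta_i|$ up to bounded factors. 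The bases $g_1^M,\ldots,g_d^M,g_{d+1},\ldots,g_r$ are multiplicatively independent over $\bbq$ by the pairwise non-coincidence hypothesis on the $g_j$, the $Q_j(\vec v) = \sum_i e_{ji}v_i$ are non-zero rational linear forms, and $|\widetilde S| \ll 1 + \max|\zeta_i|$ uniformly on $[0,1)^d$. Lemma~\ref{lem:curve} then bounds the number of $\delta$-separated $\vec v$ in the region with $\|\widetilde S(\vec v)\| \leq \epsilon$ by
\[O\bigl((1+\max|\zeta_i|)\,\delta^{-d}\,(\max|\zeta_i|)^{-1/r}\,(\epsilon + \delta\max|\zeta_i|)^{1/r}\bigr).\]

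Choosing $\delta = \epsilon/(1+\max|\zeta_i|)$ reduces the $(\epsilon + \delta\max|\zeta_i|)^{1/r}$ factor to $O(\epsilon^{1/r})$. Since any maximal $\delta$-separated subset of $B_{n_0}$ is a $\delta$-net, $B_{n_0}$ is covered by the $\delta$-balls around such a subset, so its volume is at most the count of such points times $O(\delta^d)$, yielding $\mathrm{vol}(B_{n_0}) \ll (1+\max|\zeta_i|)(\max|\zeta_i|)^{-1/r}\epsilon^{1/r}$ as required. The main obstacle the argument must navigate is the possible $\bbq$-linear dependence among the $\theta_j = 1/\log_L g_j$: without a strong form of Schanuel's conjecture we cannot ensure $d = r$, and this is precisely what forces the use of the general version of Lemma~\ref{lem:curve} with non-trivial linear forms $Q_j$ in the exponents, rather than a simpler one-parameter-per-base reduction.
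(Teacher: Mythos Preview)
Your proposal is correct and follows essentially the same approach as the paper's proof: both reduce to a maximal $\bbq$-linearly independent subset of the $1/\log_L g_j$, invoke Weyl equidistribution on the resulting lower-dimensional torus, and then appeal to Lemma~\ref{lem:curve} with the choice $\delta \asymp \epsilon/\max_i|\zeta_i|$. The only cosmetic differences are organisational: the paper tracks the integer offsets $c_j(n)$ directly and groups by their finitely many values, whereas you achieve the same effect by first splitting $n$ into residue classes modulo a common denominator $M$ and then partitioning $[0,1)^d$ by the floor functions; these are equivalent bookkeeping devices.
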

\begin{proof}
Suppose that $\bbq(1/\log_\ell g_1,\ldots,1/\log_\ell g_r)$ has dimension $1\leq d\leq r+1$, as a vector space over $\bbq$. Without loss of generality, we can assume that we have some $b_{ji}\in \bbz$ and $q\geq 1$ such that, for $d\leq j\leq r$,
\begin{equation}\label{qlogg}
\frac{q}{\log_\ell g_{j}}=b_{j0}+\sum_{1\leq i< d}\frac{b_{ji}}{\log_\ell g_i},
\end{equation}
and that $1,1/\log_\ell g_1,\ldots,1/\log_\ell g_{d-1}$ are linearly independent over $\bbq$. Note in particular that $d\geq 2$, or else $\log_L g_1\in\bbq$, contradicting $(L,g_1)=1$. Similarly, the $b_{ji}$ are not identically zero. 

Let $L=\ell^k$, so that $\log_Lg_j = \tfrac{1}{k}\log_\ell g_j$. We have
\[\frac{1}{\log_Lg_j}=\frac{kb_{j0}}{q}+\sum_{1\leq i<d}\frac{b_{ji}}{q}\frac{1}{\log_Lg_i}.\]

Multiplying by $n$ and taking the fractional part it follows that, for any $n\geq 1$ and $d\leq j\leq r$,
\[\left\{\frac{n}{\log_Lg_{j}}\right\}=\frac{c_j(n)}{q}+\sum_{1\leq i<d}\frac{b_{ji}}{q}\left\{\frac{n}{\log_Lg_i}\right\},\]
for some $c_j(n)\in\bbz$. Since the left-hand side is $\in [0,1)$ we must have $\abs{c_j(n)}\ll_{\ell,g_1,\ldots,g_r}1$. 

In particular, if we write $t_i(n)=\{n/\log_Lg_i\}$ for $1\leq i<d$, then writing $Q_j(t_1,\ldots,t_{d-1})=\sum_{1\leq i<d}\frac{b_{ji}}{q}t_i$, 
\[\zeta_1g_1^{\{n/\log_Lg_1\}}+\cdots+\zeta_rg_r^{\{n/\log_Lg_r\}}=\sum_{1\leq i<d}\zeta_ig_i^{t_i(n)}+\sum_{d\leq j\leq r}\zeta_jg_j^{\tfrac{c_j(n)}{q}}g_j^{Q_j(\vec{t}(n))}.\]

In particular, since there are $O(1)$ many possible values for $c_j(n)$, there is some finite set $U\subset \bbr^{r}$ of size $O_{\ell,g_1,\ldots,g_r}(1)$ such that
\begin{enumerate}
\item $\abs{\zeta'_i}\asymp \abs{\zeta_i}$ for all $\vec{\zeta}'\in U$ and
\item if $\|\sum_i \zeta_ig_i^{\{n/\log_Lg_i\}}\|\leq \epsilon$ then there exists some $\vec{\zeta}'\in U$ such that 
\begin{equation}\label{eq:curve}
\norm{\sum_{1\leq i<d}\zeta_i'g_i^{t_i(n)}+\sum_{d\leq j\leq r}\zeta_j'g_j^{Q_j(\vec{t}(n))}}\leq \epsilon.
\end{equation}
\end{enumerate}
Since $1,1/\log_Lg_1,\ldots,1/\log_Lg_{d-1}$ are linearly independent over $\bbq$, by Lemma~\ref{lem:weyl} the sequence $\vec{t}(n)$ is uniformly distributed in $[0,1)^{d-1}$ as $n\to \infty$. In particular, for any $\delta>0$, provided $N$ is sufficiently large depending on $\delta$, the number of $n\in [N]$ such that $\vec{t}(n)$ lies in any fixed box in $[0,1)^{d-1}$ of width $\delta$ is $O(\delta^{d-1} N)$. 

By Lemma~\ref{lem:curve} there are 
\[\ll (1+\max\abs{\zeta_i})\delta^{1-d}\max\abs{\zeta_i}^{-1/r}(\epsilon+\delta \max_i \abs{\zeta_i})^{1/r}\]
many boxes that contain some point $\vec{t}$ satisfying \eqref{eq:curve} for some $\vec{\zeta}'\in U$, and the lemma follows, choosing $\delta=\epsilon/\max_i\abs{\zeta_i}$.
\end{proof}

\section{The large spectrum of integers with small digits}\label{sec-spec}
Finally, we will prove the spectral estimate that we require: namely that if $A$ is the set of integers in $[0,g^R)$ with small digits in some base then we can bound the number of integers $k\leq M$ such that the exponential sum $\sum_{n\in A}(nk/g^R)$ is large. 

For context, we first consider what the `trivial' bound is. Let
\[A=\left\{ \sum_{0\leq i<R}c_ig^{i} : 0\leq c_i< g/2\textrm{ for }0\leq i< R\right\}\]
and
\[\Delta(M) = \{ k\leq M : \lvert \sum_{n\in A}e(nk/g^R)\rvert\geq M^{-\delta}\lvert A\rvert\}.\]
By Parseval's identity we have, when $M=g^R$,
\[\sum_{k\leq M}\abs{\sum_{n\in A}e(nk/g^R)}^2 \ll \abs{A}M\]
and hence, since $\abs{A}\gg (g/2)^R=M^{1-\log_g(2)}$,
\[\abs{\Delta(M)} \ll M^{1+2\delta}\abs{A}^{-1}\ll  M^{\log_g(2)+2\delta}.\]
As $\delta\to 0$ and $g\to \infty$ this is, of course, a great improvement over the very trivial bound of $\abs{\Delta(M)}\ll M$. This only works, however, if $M\asymp g^R$. 

We require stronger information however, and in particular for our application need an effective bound even when $M=(g^R)^{o(1)}$. In such a regime the bound arising from Parseval's identity becomes worse than trivial. 

We will prove a bound that implies, in this situation, for any $M\ll g^R$, 
\[\abs{\Delta(M)}\ll M^{\log_g(20)+2\sqrt{\delta}}.\]
The exponent here is slightly worse than what Parseval's identity delivers when $M$ is comparable to $g^R$, but the key point is that we maintain a power type saving (for sufficiently large $g$) even when $M=(g^R)^{o(1)}$. 

The key observation is that, since the exponential sum over $A$ can be factored as the product of $R$ short exponential sums, each over a short interval, the size of the exponential sum is essentially governed by the base $g$ expansion of $k$ itself. In particular the exponential sum can only be large if many of the base $g$ digits of $k$ are large, which can only happen $M^{o(1)}$ many times in $[1,M]$. 

It is worth noting the duality here: the large spectrum of a set of integers with small digit requirements is itself a set of integers with small digit requirements. This is a generally useful heuristic; it is essentially the same phenomenon as the Fourier transform of a Gaussian being another Gaussian.

\begin{lemma}\label{lem:spec}
Let $R\geq 1$ and $\kappa\in(0,1)$. Let $g >t\geq 1$ be integers. If 
\[A=\left\{ \sum_{0\leq i<R}c_ig^{i} : 0\leq c_i< t\textrm{ for }0\leq i< R\right\}\]
then, for any $K\geq 1$,
the number of integers $0\leq k<g^K$ such that
\begin{equation}\label{eq-spec}
\abs{\sum_{n\in A}e(nk/g^R)}\geq \eta \abs{A}
\end{equation}
is at most
\[(10/t)^{\min(R,K)}\exp\brac{2\sqrt{K\log t\log(1/\eta)}}g^K\]
In particular, if $X\geq 2$, for any $1\leq M\leq Xg^R$ and $\delta>0$, the number of integers $0\leq k<M$ such that
\[\abs{\sum_{n\in A}e(nk/g^R)}\geq M^{-\delta}\abs{A}\]
is at most 
\[X^{O_g(1)}M^{\log_g(10g/t)+2\sqrt{\delta}}.\]
\end{lemma}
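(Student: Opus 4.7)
My approach is to factor the exponential sum as a Riesz product, reduce to a digit-counting problem, and close the argument by combining Markov's inequality with the trivial bound $g^K$.

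First, since $A = \{\sum_{i<R} c_i g^i : 0 \le c_i < t\}$ is a Cartesian product of digit choices, the exponential sum factors as
\[\Bigl|\sum_{n \in A} e(nk/g^R)\Bigr| = \prod_{m=1}^{R}|S_t(\{k/g^m\})|,\]
where $S_t(\theta) := \sum_{c=0}^{t-1} e(c\theta)$ obeys $|S_t(\theta)| \le \min(t, 1/(2\|\theta\|))$. Writing $k = \sum_j k_j g^j$ in base $g$ with $k_j \in \{0, \ldots, g-1\}$, one checks $\{k/g^m\} \in [k_{m-1}/g, (k_{m-1}+1)/g)$, so $\|k/g^m\| \ge d(k_{m-1})/g$ for $d(x) := \min(x, g-1-x)$. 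Setting $S(x) := \min(1, g/(2t d(x)))$ (with $S(x) := 1$ when $d(x) \le g/(2t)$) gives the pointwise majorant $|S_t(\{k/g^m\})|/t \le S(k_{m-1})$. For $m > K$ we have $k_{m-1} = 0$ and $S(0) = 1$, so those factors are automatic; the digits $k_R, \ldots, k_{K-1}$ (when $K > R$) do not enter any $S_t$ factor and are free. Writing $R' := \min(R, K)$, the count in question is therefore at most $g^{K-R'}$ times
\[N_{R'} := \bigl|\{(k_0, \ldots, k_{R'-1}) \in [0, g-1]^{R'} : \prod_j S(k_j) \ge \eta\}\bigr|.\]

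Next I apply Markov's inequality with exponent $2$: $N_{R'} \le \eta^{-2}(\sum_d S(d)^2)^{R'}$. A direct computation gives $\sum_d S(d)^2 \le 5g/t$: the good digits (those with $d(d) \le g/(2t)$) contribute at most $g/t + 2$, while the remaining digits contribute at most $\tfrac{g^2}{2t^2}\sum_{d' > g/(2t)}d'^{-2} \le g/t$ via $\sum_{n > N} n^{-2} \le 1/N$. Combined with the trivial bound $g^K$, the total count is at most $\min\bigl(\eta^{-2}(5/t)^{R'},\,1\bigr) \cdot g^K$.

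The main obstacle is verifying that this $\min$ is dominated by the target bound $(10/t)^{R'}\exp(2\sqrt{K\log t \cdot c}) g^K$, where $c := \log(1/\eta)$. I handle this by case analysis. When $c \le K\log t$, the Markov branch suffices: after taking logarithms, the desired inequality reduces to $2c - R'\log 2 \le 2\sqrt{Kc\log t}$, which holds because $2\sqrt{Kc\log t} \ge 2\sqrt{c \cdot c} = 2c$ in this range. When $c \ge K\log t / 4$, the trivial branch suffices: the inequality becomes $R'\log(t/10) \le 2\sqrt{Kc\log t}$, equivalently $c \ge R'^2 \log^2(t/10)/(4K\log t)$, which follows from $R' \le K$. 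Since these two ranges overlap, every $\eta > 0$ is covered.

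The ``in particular'' clause follows from the first part by setting $K := \lceil \log_g M \rceil$ and $\eta := M^{-\delta}$, since $g^K \le gM$ and $\exp(2\sqrt{K\log t \cdot K\delta\log g}) \le g^{2K\sqrt\delta} \le M^{2\sqrt\delta} \cdot g^{2\sqrt\delta}$ using $\log t \le \log g$; the rounding of $K$ and the possibility $K > R$ contribute only $g^{O(1)} \cdot (Xg)^{O(1)}$ factors, absorbed into the $X^{O_g(1)}$ prefactor.
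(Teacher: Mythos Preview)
Your argument is correct and reaches the same bound as the paper, but by a genuinely different route. Both proofs begin identically: factor the exponential sum as a Riesz product over $R$ short geometric sums, and bound each factor $|S_t(\{k/g^m\})|/t$ by a function of the base-$g$ digit $k_{m-1}$. From there the paper introduces a free threshold parameter $C$, declares a digit ``bad'' when $|\tilde a| \ge 2Cg/t$, observes that the product being $\ge \eta$ forces at most $\log_C(1/\eta)$ bad digits, counts such $k$ combinatorially, and then optimises over $C$ (the optimum $(\log C)^2 = K^{-1}\log t\log(1/\eta)$ is exactly what produces the $\exp(2\sqrt{K\log t\log(1/\eta)})$ factor). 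You instead take a second-moment (Markov) bound $N_{R'}\le \eta^{-2}(\sum_a S(a)^2)^{R'}$, compute $\sum_a S(a)^2 \le 5g/t$, and then interpolate with the trivial bound $g^K$ via a two-range case analysis in $c=\log(1/\eta)$. The paper's threshold-and-optimise approach is more flexible (one could tune $C$ differently for other targets) and yields the final expression directly; your moment approach is shorter and avoids any optimisation, at the cost of the slightly ad hoc case split.

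One minor slip: your claim that the ``bad'' digits contribute at most $g/t$ relies on $\sum_{n>N}n^{-2}\le 1/N$ with $N=g/(2t)$, which for non-integer $N$ only gives $\le 1/\lfloor N\rfloor \le 2/N$ (and one should separately note the case $g/(2t)<1$, where $\sum_{d'\ge 1}d'^{-2}<2$ and $g/t<2$). This makes the bad contribution $\le 2g/t$ rather than $g/t$, but since $g/t+2+2g/t\le 5g/t$ (using $g>t$) your stated bound $5g/t$ survives unchanged. The deduction of the ``in particular'' clause is fine.
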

\begin{proof}
Note that the conclusion is trivial if $t=1$, and so we may henceforth assume that $t\geq 2$. Let $\Delta=\{ 0\leq k<g^K : \lvert \sum_{n\in A}e(nk/g^R)\rvert \geq \eta\abs{A}\}$. We have
\[\sum_{n\in A}e(nk/g^R)=\prod_{0\leq i<R}\brac{\sum_{0\leq c<t}e(ckg^{i-R})}.\]
Summing the geometric series yields, for any $\theta\in \bbr$, 
\[\abs{\sum_{0\leq c<t}e(c\theta)}\leq \min(t,\norm{\theta}^{-1}).\]
Suppose $k = \sum_{j\geq 0}a_j g^j$ with $0\leq a_j<g$. Note that, for $0\leq i<R$, 
\[\norm{kg^{i-R}}=\norm{\sum_{0\leq j<R-i}a_jg^{j+i-R}}=\norm{\frac{a_{R-i-1}}{g}}+\delta_i,\]
say, for some $\abs{\delta_i}< 1/g$. Writing $\tilde{a}$ for the integer with minimal  $\abs{\tilde{a}}$ such that $\tilde{a}\equiv a\pmod{g}$, it follows that

\[\norm{k g^{j-R}}>\frac{\abs{\tilde{a}_{R-j-1}}-1}{g}.\]
Let $C>1$ be some parameter to be chosen later and let $J$ be the set of those $0\leq i<R$ such that $\Abs{\tilde{a}_{R-i-1}}\geq 2Cg/t$. For $j\in J$ we have 
\[\norm{k g^{j-R}}>\frac{\abs{\tilde{a}_{R-j-1}}}{2g}\geq C/t\]
and hence
\[\abs{\sum_{0\leq c<t}e(ckg^{j-R})}\leq t/C.\]
For all $j\not\in J$ we will bound this exponential sum trivially by $t$. It follows that (as $\abs{A}=t^R$)
\[
\abs{\sum_{n\in A}e(nk/g^R)}\leq C^{-\abs{J}} \abs{A},\]
and hence for the left-hand side to be $\geq \eta\abs{A}$ we must have $\abs{J}\leq \log_C(1/\eta)$. 

Let $K'=\min(R,K)$. We now note that the number of $k<g^K$ where all except $r$ of the digits from places $0\leq i<R$ are inside some set $\subseteq \{0,\ldots,g-1\}$ of size $T$ is at most
\[\binom{K'}{r}g^{K-K'+r}T^{K'-r}.\]
Applying this with $T=\abs{[-2Cg/t,2Cg/t]}\leq 5Cg/t$ and $r=\lfloor \log_C(1/\eta)\rfloor$, it follows that 
\[\abs{\Delta}\leq 2^{K'}g^K(5C/t)^{K'-r}\leq (10C/t)^{K'}t^{r}g^K\leq (10/t)^{K'}C^Kt^{\frac{\log(1/\eta)}{\log C}}g^K.\]
In particular if we choose $C>1$ such that $(\log C)^2=\frac{\log t\log(1/\eta)}{K}$ then
\[\abs{\Delta}\leq (10/t)^{K'}\exp\brac{2\sqrt{K\log t\log(1/\eta)}}g^K\]
as required. To deduce the second part, it remains only to note that without loss of generality we can assume that $M=g^K\leq Xg^R$, whence
\[(10/t)^{K'}g^K\leq X^{O(1)}M^{\log_g(10g/t)}\]
and
\[\exp\brac{2\sqrt{K\log t\log(1/\eta)}}=M^{2\sqrt{\delta\log_gt}}\leq M^{2\sqrt{\delta}}.\]
\end{proof}

This concludes the proof of Theorem~\ref{th-ologn}.
\section{Quantitative improvements}\label{sec-imp}

As already mentioned, it would be desirable to have a more quantitatively effective version of Theorem~\ref{th-ologn}. If one examines the proof, the only source of ineffectivity is the purely qualitative `for sufficiently large $N$' in Proposition~\ref{prop-main}, which in turn arises from the use of Weyl's criterion (Lemma \ref{lem:weyl}). We recall that this is used at a critical stage in the proof of Lemma \ref{lem:badbound}, where one has some $1, 1/ \log_L g_1, 1/ \log_L g_2, ..., 1/\log_L g_{d-1}$ linearly independent over ${\mathbb Q}$, and then uses Weyl's critertion to deduce that as $n\to \infty$ the vector $(\{n/\log_L g_1\}, ..., \{n/\log_L g_{d-1}\})$ is uniformly distributed in $[0,1)^{d-1}$.

To make our final result effective we need a quantified form of Weyl's equidistribution theorem, such as the Erd\H{o}s-Tur\'{a}n-Koksma inequality \cite{koksma}.  This would mean that, instead of using the fact that
${\mathbb Q}(1/\log_L g_1, ..., 1/\log_L g_r)$ has dimension $1 \leq d \leq r+1$ over ${\mathbb Q}$, we would use an approximate version of this.  Specifically, would need to assume that (after some reordering) there exists some $1\leq d\leq r+1$ such that there is no short integer linear combination of $1, 1/\log_L g_1,..., 1/\log_L g_{d-1}$ (i.e. linear combinations 
$m_1 + m_2 /\log_L g_1 + \cdots + m_d / \log_L g_{d-1}$, 
where the $m_i$ satisfy some bound $|m_i| < M$, not all $0$, for an appropriate choice of $M$) too close to $0$ in magnitude; furthermore, each of the remaining $g_i$ will have the property that an approximate version of (\ref{qlogg}) holds.  

There are a number of technical issues that present when attempting to implement this precisely, but heuristically this should give a version of Lemma \ref{lem:badbound} where the conclusion to that lemma now holds for $N\ll \epsilon^{-O(1)}$. Since $N\asymp \log_L n$ and $\log L\asymp \epsilon^{-1}\log(1/\epsilon)$ in the proof of Theorem~\ref{th-ologn} this in turn means that we may choose
\[\log n \asymp \epsilon^{-O(1)}.\]
Choosing $\epsilon>0$ to satisfy this, and proceeding with the rest of the proof of Theorem~\ref{th-ologn} unchanged, would prove a quantitatively effective version of Theorem~\ref{th-ologn} in which one could replace $\epsilon \log n$ by $(\log n)^{1-c}$ for some constant $c>0$ depending on the $g_i$ and $\kappa_j$.

A more direct (albeit much more difficult) way that we could get effective bounds in Theorem \ref{th-ologn} would be to prove a quantitative version of a special case of Schanuel's conjecture \cite{lang}.

\begin{conjecture}[Schanuel's conjecture]\label{conj-schanuel}
Given any $z_1,...,z_r\in\bbc$ that are linearly independent over $\bbq$ the field extension ${\mathbb Q}(z_1,\ldots,z_r,e^{z_1},\ldots,e^{z_r})$ has transcendence degree at least $r$ over ${\mathbb Q}$.
\end{conjecture}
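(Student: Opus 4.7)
The statement is Schanuel's conjecture, which has stood open for over sixty years and is widely regarded as the deepest problem in transcendence theory; I should be upfront that I do not have a plausible strategy to prove it in full, and the best I can do is sketch the approach that succeeds in the known special case and identify where it breaks. The one general unconditional case is the Lindemann--Weierstrass theorem, which handles the situation where $z_1,\ldots,z_r$ are algebraic over $\bbq$: in that setting the conclusion is that $e^{z_1},\ldots,e^{z_r}$ are already algebraically independent, so the full transcendence degree $r$ is realized by the exponentials alone.

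The plan for Lindemann--Weierstrass, which is the natural template to try to extend, goes as follows. Assume for contradiction a non-trivial polynomial relation $P(z_1,\ldots,z_r,e^{z_1},\ldots,e^{z_r})=0$ with coefficients in $\bbq$ (or in $\overline{\bbq}$, using a Galois/symmetrization step to descend to $\bbq$). First, construct an auxiliary analytic function $F(z)$ as a well-chosen polynomial in $z$ and $e^z$ of large degree $D$, whose coefficients are small integers obtained by solving a large linear system via Siegel's lemma so that $F$ and many of its derivatives vanish at the $z_i$. Second, use the assumed algebraic relation and the algebraicity of the $z_i$ to show that the values $F^{(k)}(z_i)$ are algebraic numbers of controlled height and denominator; combining a Liouville-type lower bound (a nonzero such number is not too small) with an analytic upper bound from the maximum modulus principle applied to $F/\prod (z-z_i)^k$ yields the contradiction. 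The key step is the interaction between the arithmetic lower bound on non-vanishing algebraic numbers and the analytic upper bound from many forced zeros.

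The main obstacle, and the reason Schanuel is open, is exactly where this recipe collapses when the $z_i$ are only required to be $\bbq$-linearly independent rather than algebraic: there is then no arithmetic control on $z_i$ (no height, no denominator), so the Liouville-style lower bound used in step two disappears and one cannot close the contradiction. Current technology (Gelfond--Schneider, Baker, W\"ustholz's analytic subgroup theorem) gives only that the transcendence degree is at least $1$ in various configurations, not the full $r$. The function-field analogue has been proved by Ax using derivations on differential fields, but the differential-algebraic ingredients have no known number-field counterpart; indeed, even the single case $r=2$ with $z_1=1,z_2=i\pi$ of the conjecture would imply the famously open transcendence of $e+\pi$.

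Given this, my honest proposal is not a plan for a proof but rather a plan for a conditional use, which is all that the paper actually requires: treat Conjecture~\ref{conj-schanuel} as a hypothesis and quantify it, asking for the statement that for any small integer vector $(m_0,\ldots,m_r)\neq 0$ with $\max|m_i|\leq M$, the linear form $m_0 + \sum m_i/\log_\ell g_i$ is bounded below by some explicit function of $M$ and the $g_i$. Such a quantitative Schanuel-type statement would feed directly into the Erd\H{o}s--Tur\'{a}n--Koksma version of Lemma~\ref{lem:weyl} sketched in Section~\ref{sec-imp} and yield effective bounds in Theorem~\ref{th-ologn}; proving even this quantitative refinement, however, would already constitute a major advance in transcendence theory well beyond the scope of the present work.
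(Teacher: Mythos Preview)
Your assessment is correct: the statement is Schanuel's conjecture, and the paper does not prove it either --- it is stated as a conjecture and invoked only hypothetically in Section~\ref{sec-imp} to discuss what a quantitative version (your final paragraph, essentially Conjecture~\ref{strongschanuel}) would buy for the effectivity of Theorem~\ref{th-ologn}. Your honest identification of the Lindemann--Weierstrass template, the point at which it breaks when the $z_i$ are not algebraic, and the proposal to use a quantitative form as a hypothesis matches exactly how the paper treats the conjecture; there is nothing to correct.
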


Taking $z_1 = \log g_1$, $z_2 = \log g_2$, ..., 
$z_r = \log g_r$, we see that one consequence of this conjecture is that if
$$
{m_1 \over \log_L g_1} + \cdots + {m_r \over \log_L g_r}\ \in\ {\mathbb Z},
$$
with $m_1,...,m_r \in {\mathbb Z}$, then 
$m_1=\cdots = m_r = 0$.  This would simplify the proof of Lemma \ref{lem:badbound}
somewhat and give a stronger conclusion. To obtain a strong, effective bound in Theorem \ref{th-ologn}, without working with approximate integer linear dependencies as described at the beginning of this subsection, then we would need to use the Erd\H{o}s-Tur\'{a}n-Koksma theorem (as a quantitative version of Weyl's lemma) and a quantitative statement like the following.

\begin{conjecture}\label{strongschanuel} Let $g_1,...,g_r \geq 2$ be integers
such that $g_i^{a_i} \neq g_j^{b_j}$ for all $i \neq j$ and integers $a_i, b_j \geq 1$.  Then, for all $M \geq 2$,
$$
\min_{m_1,...,m_r} \left \| {m_1 \over \log_L g_1} + \cdots + {m_r \over 
\log_L g_r} \right \|\ >\ {1 \over M^{r (1+o(1))}},
$$
where the minimum is taken over all integers $m_1,..., m_r$ satisfying $|m_i| \leq M$, $i=1,...,r$, and not all $m_i = 0$.
\end{conjecture}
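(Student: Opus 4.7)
The plan is to reduce the conjecture to a quantitative statement about a linear form in products of logarithms, and then appeal to (a strong quantitative form of) Schanuel's conjecture. First I would clear denominators: writing $\Lambda = \sum_{i=1}^r m_i/\log_L g_i = (\log L)\sum_{i=1}^r m_i/\log g_i$, for any integer $n$ we have
\[
P \;:=\; \bigl(\Lambda - n\bigr)\prod_{i=1}^r \log g_i \;=\; (\log L)\sum_{i=1}^r m_i\prod_{j\neq i}\log g_j \;-\; n\prod_{i=1}^r\log g_i.
\]
This is a $\bbz$-linear combination, with coefficients $m_1,\ldots,m_r,-n$ (all $O_r(M)$ once $n$ is chosen to minimise $|\Lambda - n|$), of the $r+1$ fixed real numbers $(\log L)\prod_{j\neq i}\log g_j$ for $1\leq i\leq r$ together with $\prod_{i=1}^r\log g_i$. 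Since $\prod_i\log g_i$ is a positive constant depending only on $g_1,\ldots,g_r$, proving $\|\Lambda\|>M^{-r(1+o(1))}$ reduces to proving $|P|>M^{-r(1+o(1))}$ for every non-trivial such combination with $|m_i|\leq M$.

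Next, under Schanuel's conjecture the numbers $\log g_1,\ldots,\log g_r,\log\ell$ are algebraically independent over $\bbq$: the multiplicative independence hypothesis $g_i^{a_i}\neq g_j^{b_j}$, together with $(\ell,g_1\cdots g_r)=1$, ensures these logarithms are $\bbq$-linearly independent, and Schanuel upgrades linear independence to algebraic independence for logarithms of integers. Consequently the $r+1$ specific products appearing in $P$ are themselves $\bbq$-linearly independent, since any non-trivial relation among them would yield a non-trivial polynomial relation among $\log g_1,\ldots,\log g_r,\log\ell$; in particular $P\neq 0$ for every non-trivial choice of $(m_1,\ldots,m_r,n)$. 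To convert this qualitative non-vanishing into the desired quantitative lower bound I would invoke (or try to prove) a sharp \emph{measure} of algebraic independence for these $r+1$ quantities: for any non-zero $\bbz$-linear combination of them with integer coefficients bounded by $M$, the absolute value is $>M^{-r(1+o(1))}$.

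The main obstacle is precisely this last step. Baker's theorem supplies excellent lower bounds $\exp(-C_r(\log M)^{O(1)})$ for ordinary linear forms in the logarithms $\log g_1,\ldots,\log g_r,\log\ell$, but those results do not apply to linear combinations of \emph{products} of logarithms, and the reduction above takes us outside Baker's framework as soon as we clear denominators. Unconditional measures of algebraic independence in the tradition of Nesterenko, Fel'dman, and Waldschmidt typically produce exponents very much larger than the optimal $r$ predicted here, and matching the trivial Dirichlet upper bound $M^{-r}$ for transcendentals defined multiplicatively rather than algebraically appears to require a genuinely new idea. In short, the proposed plan only reduces the conjecture to another notoriously hard problem: a complete proof seems to require either Schanuel's conjecture equipped with an effectively optimal measure of algebraic independence, or a bespoke argument exploiting the arithmetic structure of integer logarithms.
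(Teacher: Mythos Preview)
This statement is presented in the paper as a \emph{conjecture}, not a theorem; the paper offers no proof, only a probabilistic heuristic (treat the $1/\log_L g_i$ modulo $1$ as independent uniform random variables, so that none of the $(2M+1)^r-1$ nonzero linear combinations should land within roughly $M^{-r}$ of an integer). Your overall conclusion---that a proof would require a quantitative form of Schanuel's conjecture with an essentially optimal exponent, which is well beyond current technology---is exactly the paper's position.

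That said, your reduction contains a genuine error. You claim that the pairwise hypothesis $g_i^{a_i}\neq g_j^{b_j}$ together with $(\ell,g_1\cdots g_r)=1$ forces $\log g_1,\ldots,\log g_r,\log\ell$ to be $\bbq$-linearly independent, so that Schanuel then yields algebraic independence. This fails already for $g_1=2$, $g_2=3$, $g_3=6$, $\ell=5$: the pairwise condition holds (no $g_i$ is a rational power of another), yet $\log 6=\log 2+\log 3$. The paper is careful about this elsewhere---its separation lemma explicitly allows the $\bbq$-span of the $1/\log_\ell g_j$ to have dimension strictly less than $r+1$. Your non-vanishing claim $P\neq 0$ can in fact be rescued under Schanuel: apply the conjecture to a maximal $\bbq$-linearly independent subset of the relevant $\log p$ (primes $p\mid \ell g_1\cdots g_r$), and then check that the $r+1$ specific products $(\log L)\prod_{j\neq i}\log g_j$ and $\prod_i\log g_i$ remain $\bbq$-linearly independent; this follows because pairwise non-proportional linear forms $L_i$ have $\bbq$-linearly independent reciprocals $1/L_i$ in the function field. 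But this is not the one-line deduction you present. The final quantitative step, passing from $P\neq 0$ to $\lvert P\rvert>M^{-r(1+o(1))}$, remains entirely open, as you correctly acknowledge.
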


This conjecture is based on the following heuristic. Assuming the $1/\log_L g_i$ mod $1$ behave like random real numbers in the interval $(0,1)$, the set of integer linear combinations $m_1 / \log_L g_1 + \cdots +m_r /\log_L g_r$ mod $1$ vary over $(2M+1)^r-1$ numbers as the $m_i$ vary over the integers in $[-M,M]$, not all $0$.  If these linear combinations were themselves like random real numbers in $(0,1)$, we would expect none of them come closer than about $(2M+1)^{-r}$ to $0$. 

If one traces the consequences of this conjecture through the proof in this paper, it gives infinitely many integers $n$ such that for all $i=1,...,r$, all but at most $(\log n)^{1/2 + o(1)}$ base $g_i$ digits of $n$ are $< \kappa_i g_i$. All the implied constants would be effectively computable, provided the implied constants in Conjecture \ref{strongschanuel} are computable.

\appendix

\section{Construction of bump function}\label{app}
In this appendix we give details of the construction of the bump function used in the proof of Proposition~\ref{prop-fourier}. Of course, bump functions with these properties are ubiquitous in Fourier analysis, but since we could not find an easy reference for the precise properties we required, we have elected to give a proof here for the benefit of the reader.
\begin{lemma}
For any $\delta\in(0,1)$ and $J\geq 1$ there exists a function $\psi:\bbr/\bbz\to \bbr$ supported on $[-\delta/2,\delta/2]$ such that
\begin{enumerate}
\item $\widehat{\psi}(0)=\| \psi\|_1=1$,
\item $0\leq \widehat{\psi}(k)\leq \min(1,(J^2/\delta k)^{2J})$ for all $k\in \mathbb{Z}$, and
\item $\sum_{k\in \bbz} \widehat{\psi}(k)=\psi(0)\leq 4/\delta$.
\end{enumerate}

\end{lemma}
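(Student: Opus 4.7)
The standard approach is to build $\psi$ as an iterated convolution of (normalized) indicator functions of short intervals; this automatically produces a non-negative bump function whose Fourier transform is a product of sinc factors and hence decays as fast as we like, while the convolution structure makes the $L^1$ norm trivial to compute.

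Specifically, I would define the building block $\chi_w(x) = w^{-1}\mathbf{1}_{[-w/2,\,w/2]}(x)$, so that $\|\chi_w\|_1 = 1$ and $\widehat{\chi_w}(k) = \sin(\pi k w)/(\pi k w)$, giving $|\widehat{\chi_w}(k)| \leq \min(1,1/(\pi|k|w))$. Then, for $J \geq 2$, I would set $w_0 = \delta/4$, $w = \delta/(4J)$, and take
\[\psi \;=\; \chi_{w_0}\ast\chi_{w_0}\ast\underbrace{\chi_w\ast\cdots\ast\chi_w}_{2J\text{ copies}}.\]
The total support has half-width $w_0 + Jw = \delta/4 + \delta/4 = \delta/2$, as required. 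Since $\psi$ is a convolution of non-negative functions each of $L^1$ norm $1$, we get $\psi\geq 0$ and $\widehat\psi(0) = \|\psi\|_1 = 1$, giving property (1).

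For property (2), observe $\widehat\psi(k) = \widehat{\chi_{w_0}}(k)^2 \,\widehat{\chi_w}(k)^{2J}$, a product of real squares, so $\widehat\psi(k)\geq 0$; each factor is $\leq 1$ in absolute value, so $\widehat\psi(k)\leq 1$. For the polynomial decay, bound $|\widehat{\chi_w}(k)|\leq 1/(\pi|k|w) = 4J/(\pi|k|\delta)$, giving
\[\widehat\psi(k)\;\leq\;\bigl(4J/(\pi|k|\delta)\bigr)^{2J}\;\leq\;\bigl(J^2/(|k|\delta)\bigr)^{2J},\]
where the last inequality uses $4/\pi < J$ for $J\geq 2$. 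For property (3), the key identity is the convolution bound $\|f\ast g\|_\infty\leq \|f\|_\infty\|g\|_1$, which yields $\psi(0)\leq \|\chi_{w_0}\ast\chi_{w_0}\|_\infty \cdot \|\chi_w^{\ast 2J}\|_1 \leq 1/w_0 = 4/\delta$. The formula $\sum_k\widehat\psi(k) = \psi(0)$ then follows from Fourier inversion, which converges absolutely since $\widehat\psi(k) = O(|k|^{-2J}) = O(|k|^{-2})$.

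The only real snag is the boundary case $J = 1$, where $4/\pi > J^2 = 1$ and the general bound fails by a constant factor. I would handle this separately by setting $\psi = \chi_{\delta/2}\ast\chi_{\delta/2}$: this has support $[-\delta/2,\delta/2]$, peak $\psi(0) = 2/\delta \leq 4/\delta$, and $\widehat\psi(k) = \bigl(\sin(\pi k\delta/2)/(\pi k\delta/2)\bigr)^2 \leq (2/(\pi|k|\delta))^2 = (4/\pi^2)(k\delta)^{-2} < (k\delta)^{-2}$, which is exactly $(J^2/(k\delta))^{2J}$ for $J=1$. No step is genuinely difficult; the main thing to be careful about is choosing the numerical constants ($w_0 = \delta/4$ to control the peak, and $w = \delta/(4J)$ to control Fourier decay) so that the support budget $\delta/2$ is not exceeded and all three listed bounds hold on the nose.
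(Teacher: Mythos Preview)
Your proof is correct and takes essentially the same approach as the paper—an iterated convolution of normalized indicator intervals, so that $\widehat\psi$ is a product of squared sinc factors. The only differences are cosmetic: you use uniform widths $\delta/(4J)$ together with an extra wide pair at width $\delta/4$ for peak control (versus the paper's decreasing widths $c_j\delta=\delta/(4j^2)$), and you bound $\psi(0)$ via the $L^\infty$--$L^1$ convolution inequality rather than via Parseval applied to one factor; your explicit handling of the case $J=1$ is a nice touch.
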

\begin{proof}
Let $c_1,\ldots,c_J\geq 0$ be constants to be chosen later with $\sum c_j\leq 1/2$ and $\psi_j=(c_j\delta)^{-1}1_{[-c_j\delta/2,c_j\delta/2]}$. The Fourier transform of $\psi_j$ is
\[\widehat{\psi_j}(k)=\frac{\sin(\pi c_j\delta k)}{\pi c_j \delta k}.\]
We choose $\psi=\psi_1\ast \psi_1\ast \cdots \ast \psi_J\ast \psi_J$. Note that $\psi\geq 0$ pointwise and therefore
\[\widehat{\psi}(0)=\int_0^1 \psi(t)\td t=\norm{\psi}_1=\prod_{j} \norm{\psi_j}_1^2=1,\]
since each $\psi_j$ was normalised to have $\norm{\psi_j}_1=1$. Furthermore since $\sum_j c_j\leq 1/2$ this is supported on $[-\delta/2,\delta/2]$ as required, and 
\[\widehat{\psi}=\prod_j \widehat{\psi_j}^2\]
and so in particular $\widehat{\psi}\geq 0$ pointwise. Furthermore
\[\sum_k \widehat{\psi}(k)\leq \sum_k \widehat{\psi_1}^2= \| \psi_1\|_2^2=(c_1\delta)^{-1}.\] 
We will choose $c_1=1/4$. It remains to show that
\[\widehat{\psi}(k)\leq (J^2/\delta k)^{2J}\]
for all $k\in \bbz$. The left-hand side is
\[(\pi \delta k)^{-2J}\prod_{j}\frac{\sin(\pi c_j\delta k)^2}{c_j^2}.\]
If we choose $c_j=1/4j^2$ then this right-hand side is 
\[\leq 4^{2J}(\pi \delta k)^{-2J}(J!)^{4}.\]
Using $J! \leq J^J/e^J$ and $4/\pi e^2<1$ this is at most $(J^2/\delta k)^{2J}$ as required.
\end{proof}
\end{document}